\newtheorem{thm}{Theorem}
\newtheorem*{thm*}{Theorem}
\newtheorem{cor}[thm]{Corollary}
\newtheorem{lem}[thm]{Lemma}
\newtheorem{prop}[thm]{Proposition}
\newtheorem{rem}[thm]{Remark}
\theoremstyle{definition}
\numberwithin{thm}{section}
\numberwithin{table}{section}
\newcommand{\Z}{\mathbb{Z}}
\newcommand{\N}{\mathbb{N}}
\newcommand{\weyl}{\mathcal{W}}
\newcommand{\code}{L}%{\mathsf{code}}
\newcommand{\I}{I}
\newcommand{\rr}[1]{\mathbf{#1}}
\newcommand{\rrt}[2]{\widehat{\mathbf{#1}}_{{#2}}}
\renewcommand{\tilde}{\widetilde}
\title[The Bruhat Order via Intrinsic Coverings of Compositions]{The Bruhat Order on Symmetric Groups via Intrinsic Coverings of Compositions}
\author[lambert]{Jordan Lambert}
\address[Jordan Lambert]{Department of Mathematics, ICEx, Universidade Federal Fluminense, Volta Redonda-RJ, Brazil}
\email{jordanlambert@id.uff.br}
\author[rabelo]{Lonardo Rabelo}
\address[Lonardo Rabelo]{Department of Mathematics, Federal University of Juiz de Fora, Brazil}
\email{lonardo.rabelo@ufjf.br}
\thanks{This work was supported by the  FAPERJ (Carlos Chagas Filho Foundation for Supporting Research in the State of
Rio de Janeiro) no. 010.002602/2019.}
\keywords{Symmetric Group, Permutations, Bruhat order, Lehmer's code, Compositions}
\subjclass[2020]{Primary 05A05, 06A07, 20B05}
\begin{document}

\begin{abstract} Lehmer’s code defines a bijection between the symmetric group and the set of staircase compositions. In this paper, we characterize a poset structure on these compositions that is equivalent to the strong Bruhat order on the symmetric group. This construction is intrinsic and does not require any reference to the associated permutations.
\end{abstract}

\maketitle

\allowdisplaybreaks

\section{Introduction}

This paper aims to characterize the strong Bruhat order on the symmetric group $S_n$ via the associated Lehmer code. The code defines a bijection between $S_n$ and the set of staircase compositions $\mathcal{C}_n$, which naturally leads to the question: how does the strong Bruhat order manifest within $\mathcal{C}_n$? In this work, we will define an intrinsic partial order in $\mathcal{C}_n$ and prove its equivalence to the strong Bruhat order on $S_n$. Of particular interest, this poset is defined without any reference to the associated permutations.

The construction of the intrinsic partial order in $\mathcal{C}_n$ may be summarized as follows.
Any composition $\alpha=(\alpha_1,\ldots, \alpha_{n-1}) \in \mathcal{C}_n$ determines an unique diagram, from which we construct a matrix $c_{i,j}(\alpha)$ that encodes how the rows of the composition are stacked in the diagram. Given two compositions $\alpha$ and $\alpha'$ such that $|\alpha|=|\alpha'|+1$, we define the covering relation by imposing four conditions among their parts (see Equations \eqref{eq:a1}, \eqref{eq:a2},\eqref{eq:a3} and \eqref{eq:a4}). The simplest case occurs when the difference between $\alpha$ and $\alpha'$ lies in a unique single part, i.e., $\alpha'_i=\alpha_i-1$. However, a covering relation may also arise from simultaneous changes in two parts. In such cases, there exist indices $i<j$ such that $\alpha'_i< \alpha_i-1$, and the excess is transferred to another part, giving $\alpha'_j= \alpha_j+(\alpha_i-\alpha'_i-1')$. This process is not arbitrary: one must ensure that there is enough space to move and to accommodate the excess in the diagram. Specifically, Equation \eqref{eq:a3} ensures that the movement along the diagram is possible, and the condition $c_{i,j}(\alpha)=c_{i,j}(\alpha')=\alpha'_i-\alpha_j$, given by Equation \eqref{eq:a4}, must be satisfied.

This covering relations define a poset which is denoted by $(\mathcal{PC}_n, \preceq)$. We show that it is isomorphic to the poset on the symmetric group endowed with the strong Bruhat order (Theorem \ref{thm:codecovering2}). This result relies on an interplay between the permutation matrices and the  Extended Lehmer code, as we show it is the same as the matrix $c_{i,j}$ (Proposition \ref{prop:valueD}).
We also need to relate the existence of the covering pairs through Coxeter movings over the corresponding reduced decompositions (Proposition \ref{prop:moves_property2}). 
This is illustrated by the ladder moves over the corresponding diagrams (see Subsection \ref{subsection:diagrams}).

Finally, for a given composition $\alpha\in \mathcal{C}_n$, we present a method to identify the elements covered by $\alpha$ and those by which $\alpha$ is covered. This property is obtained by checking if $\alpha$ is $(i,z)$-removable or $(i,z)$-insertable, for any pair $(i,z)$ of positive integers representing either the removed or the inserted box of $\alpha$ (Propositions \ref{prop:remove_condition} and \ref{prop:insertion_condition}). We briefly discuss how it may be used to compute the formula of the Monk's rule (see Subsection \ref{subsec:monksrule}). 

We should remark that this problem was motivated by a geometric question concerning the Bruhat decomposition on the maximal flag varieties of $\mathrm{Sl}_n(\mathbb{R})$. The Schubert cells are parametrized by $S_n$ in a such way the computation of the incidence coefficients among them requires data coming from the determination of the covering pairs (for details see Rabelo--San Martin\cite{RSm19} and Matszangosz \cite{Mat19}).

A secundary topic explored in this work is the realization of permutations by diagrams. Our presentation resembles the construction of Young's diagrams in the context of the Grassmannian permutations. There exists a variety of types of diagrams when dealing with permutations (for example, see Manivel \cite{Man01} for the Rothe Diagrams, Coşkun--Taşkın \cite{CT18} for the Tower diagrams, and Bergeron-Billey \cite{BBil93} for the RC-graphs). We hope to present here some advantages of our choice, for example, the row-reading map that provides a direct reduced decomposition of permutation in terms of simple reflections. 

It is worth noting that our interpretation of the covering relations in terms of the composition's diagrams are similar to those obtained by Coşkun--Taşkın in the context of the Tower diagrams (Proposition \ref{prop:laddermoves} vs. \cite{CT18}, Theorem 4.1). In addition, Denoncourt \cite{Den13} also establishes a covering equivalence result in the (left) weak order using the extended Lehmer code (see \cite{Den13}, Proposition 2.8).

This work is arranged as follows.
In Section \ref{sec:preliminaries}, we introduce the main definitions of the combinatorics of the symmetric group.
In Section \ref{sec:newposet}, we define the poset for the set of staircase compositions $\mathcal{C}_n$ and derive some intrinsic properties.
In Section \ref{sec:equivalence}, we prove the equivalence between this poset in $\mathcal{C}_n$ and the strong Bruhat order of $S_n$.
To conclude, in Section \ref{sec:rem_ins_comp}, we introduce both the removing and inserting algorithms over the compositions.

\section{Preliminaries}\label{sec:preliminaries}

Let $\N=\{1,2,3, \dots\}$ and $\Z$ be the set of integers. For $n,m\in\Z$, with $n\leqslant m$, denote the set $[n,m]=\{n,n+1, \dots, m\}$. For $n\in \N$, denote $[n]=[1,n]$.

The symmetric group $S_n$, regarded as a Coxeter group of type A, is generated by simple transpositions $s_i$ for  $i\in [n-1]$ subject to the relations:
\begin{itemize}
\item $s_{i}^{2}  = 1$;
\item Commutation: $s_is_j=s_js_i$ for $|i-j|\geqslant 2$;
\item Braid: $s_is_{i+1}s_i=s_{i+1}s_is_{i+1}$ for $i\in[n-2]$.
\end{itemize}

Generically, a move refers to either a commutation or a braid relation.
Given any permutation $w \in S_n$, the length $\ell(w)$ of $w$ is the minimal number of simple transpositions needed to decompose $w$ as a product $w=s_1 \ldots s_{\ell(w)}$. Such a decomposition is called reduced. The Word Property guarantees that every two reduced decompositions for $w$ can be connected by a sequence of moves.

There is a partial order in $S_n$ which is called the (strong) Bruhat order: $u\leqslant w$ if given a reduced decomposition $w = s_{j_{1}} \cdots s_{j_{r}}$ then $u=s_{j_{i_{1}}}\cdots s_{j_{i_{k}}}$ for some $1\leqslant i_1\leqslant \cdots \leqslant i_r\leqslant r$.

If there exists $w,w' \in \weyl$ such that $w'\leqslant w$ and $\ell(w) = \ell(w')+1$ then $w$ covers $w'$ (alternatively, $w,w'$ is a covering pair). Given a reduced decomposition $w=s_{1}\cdots s_{\ell}$, if $w$ covers $w'$ then $w' = s_{1} \cdots \widehat{s_{\I}}\cdots s_{\ell}$, for some $I \in [\ell]$. The integer $\I$ depends on both $w'$ and the choice of the reduced decomposition of $w$.

It is also interesting to denote a permutation $w\in S_{n}$ in the one-line notation by $w=w(1)w(2)\cdots w(n)$. In this permutation model, the simple reflections may be viewed as transpositions $s_i=(i,i+1)$ in such a way that $s_i$ acts at right by swapping $w(i)$ and $w(i+1)$ (the values at positions $i$ and $i+1$) while $s_i$ acts at left by exchanging the values $i$ and $i+1$. An inversion of $w$ is a pair $(i,j)$ such that $i<j$ and $w(i)>w(j)$. The length $\ell(w)$ is precisely the number of the inversions of $w$.

The following lemma provides a specific characterization of the covering relation using the one-line notation.

\begin{lem}[\cite{BB05}, Lemma 2.1.4]\label{lem:bjorner}
Let $w,w'\in S_{n}$. Then, $w$ covers $w'$ in the Bruhat order if and only if $w = w' \cdot (i,j)$ for some transposition $(i,j)$ with $i<j$ such that $w'(i) < w'(j)$ and there does not exist any $k$ such that $i<k<j$, $w'(i) < w'(k) <w'(j)$.
\end{lem}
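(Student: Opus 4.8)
The plan is to reduce the statement to two ingredients: a direct count of how the number of inversions changes under right multiplication by a transposition, and the equivalence between the subword definition of the Bruhat order and its description via length-increasing transpositions. Since the order is graded by length---the subword definition forces $\ell$ to increase strictly as one moves up---$w$ covers $w'$ if and only if $w' \leqslant w$ and $\ell(w) = \ell(w') + 1$, so it suffices to control these two quantities.

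First I would carry out the inversion count. Writing $a = w'(i)$ and $b = w'(j)$ with $i < j$, right multiplication by $(i,j)$ swaps the entries in positions $i$ and $j$ of the one-line notation of $w'$ and fixes the rest, so only pairs involving position $i$ or $j$ can change their inversion status. Comparing $w'$ and $w = w'\cdot(i,j)$ pair by pair, the pair $(i,j)$ itself contributes a net change of $+1$ precisely when $a < b$, while for each intermediate index $k$ with $i < k < j$ the two pairs $(i,k)$ and $(k,j)$ together contribute $+2$ when $a < w'(k) < b$ and $0$ otherwise (under $a<b$). This yields
\[
\ell(w) - \ell(w') = 1 + 2\,\bigl|\{\, k : i < k < j,\ w'(i) < w'(k) < w'(j)\,\}\bigr|,
\]
valid when $w'(i) < w'(j)$. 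Hence $\ell(w) = \ell(w') + 1$ holds exactly under the two stated combinatorial conditions.

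Next I would connect length-increasing transpositions to the Bruhat order. For the backward implication, whenever $w = w'\cdot(i,j)$ with $\ell(w) = \ell(w') + 1$, I would exhibit $w'$ as a subword of a reduced decomposition of $w$: the deletion form of the exchange argument produces a reduced word $w = s_{1}\cdots s_{\ell}$ with $w' = s_{1}\cdots\widehat{s_{\I}}\cdots s_{\ell}$, so $w' \leqslant w$ by definition, and gradedness upgrades this to a cover. For the forward implication, given a covering and a reduced decomposition $w = s_{1}\cdots s_{\ell}$, the subword property yields $w' = s_{1}\cdots\widehat{s_{\I}}\cdots s_{\ell}$ for some $\I$; a telescoping cancellation then gives $w^{-1}w' = s_{\ell}\cdots s_{\I+1}\,s_{\I}\,s_{\I+1}\cdots s_{\ell}$, a conjugate of $s_{\I}$ and hence a transposition $(i,j)$, so that $w = w'\cdot(i,j)$. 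Feeding this transposition into the inversion formula and using $\ell(w) = \ell(w') + 1$ forces $w'(i) < w'(j)$ and the absence of an intermediate $k$.

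The main obstacle is the passage between the subword description and the transposition description of covers, namely the exchange/deletion step asserting that a length-increasing transposition corresponds to omitting a single letter from a reduced decomposition. I would handle this either by invoking the Strong Exchange Property for Coxeter groups, or, to remain self-contained, by the explicit reduced-word cancellation above combined with the Word Property, which lets me normalize the chosen reduced decomposition so that the omitted letter is recovered unambiguously. The inversion count, while routine, is the step that converts this algebraic description into the purely combinatorial one-line criterion of the statement.
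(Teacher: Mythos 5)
The paper does not prove this lemma at all: it is imported verbatim from Björner--Brenti \cite{BB05} as a black-box ingredient, so there is no internal proof to compare yours against, and your argument has to be judged on its own merits. On those terms it is essentially correct and is the standard argument. The inversion count is right: when $w'(i)<w'(j)$, only pairs involving positions $i$ or $j$ can change status, the pairs $(m,i)$ and $(m,j)$ with $m<i$ (or $m>j$) compensate each other, and each intermediate $k$ contributes $+2$ exactly when $w'(i)<w'(k)<w'(j)$, giving $\ell(w)-\ell(w')=1+2\,\#\{k : i<k<j,\ w'(i)<w'(k)<w'(j)\}$; so the length-one condition is equivalent to the stated combinatorial one. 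The cancellation $w^{-1}w'=s_{\ell}\cdots s_{I+1}s_{I}s_{I+1}\cdots s_{\ell}$ correctly converts a deleted letter into a transposition (forward direction), and the Strong Exchange Property correctly converts a length-increasing transposition into a deleted letter (backward direction). Note that this last step is forced on you only because this paper takes the subword characterization as the \emph{definition} of the Bruhat order; in \cite{BB05} the order is defined via transpositions, and there the lemma is nothing more than the inversion count.

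Two caveats. First, your proposed ``self-contained'' substitute for Strong Exchange does not work: the explicit cancellation plus the Word Property runs only in the direction you already have (deleted letter $\Rightarrow$ transposition); producing a deleted-letter expression from a length-increasing transposition is precisely the content of Strong Exchange, and normalizing reduced words by moves does not obviously yield it. You should simply invoke Strong Exchange, as your primary route does. Second, your biconditional ``$w$ covers $w'$ iff $w'\leqslant w$ and $\ell(w)=\ell(w')+1$'' is not fully justified by your parenthetical if ``covers'' means the abstract poset notion: strict length increase along the subword order gives only the ``if'' half, while the ``only if'' half is gradedness of the Bruhat order, a separate theorem. This is harmless here only because the paper \emph{defines} a covering pair by exactly that length-one condition.
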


The lemma says that if $w=w(1)w(2)\cdots w(n)$ then $w'$ is covered by $w$ if and only if $w'$ is obtained from $w$ by switching the values in position $i$ and $j$, for some pair $i<j$, and such that no value between positions $i$ and $j$ lies in $[w(j),w(i)]$.

A finite integer sequence $\alpha = (\alpha_{1}, \dots, \alpha_{m})$ is a (weak) composition if $\alpha_{i} \geqslant 0$ for all $i\in [m]$. If required, we assume that $\alpha_{i}=0$ for $i>m$.
The elements $\alpha_{i}$ of the sequence are called the parts, the number $\ell(\alpha)$ of parts is the length, and the sum $|\alpha|$ of the parts is the weight of the composition. A partition is a weakly decreasing decomposition.

Define the set $\mathcal{C}_{n}$ as the set of compositions $\alpha$ such that $\alpha_{i} \leqslant n-i$, i.e., the set of elements inside the cartesian product $[0,n-1]\times [0,n-2] \times \cdots \times [0,1]$.

The Lehmer code (briefly called code) of a permutation $w\in S_{n}$ is an integer sequence $\alpha$ with $\alpha_{i} = \#\{k>i \ |\ w(k)<w(i)\}$ and it will be denoted by $\code(w)$. In other words, each entry of the code corresponds to the number of inversions to the right of $w_{i}$. Since $0\leqslant \alpha_{i}\leqslant n-i$, we have $\alpha=(\alpha_{1}, \dots, \alpha_{n-1})=\code(w) \in \mathcal{C}_n$.

Let us describe the permutation matrix of $w$. Consider an $n\times n$ array of boxes with rows and columns indexed by integers $[n]$ in matrix style. The \emph{permutation matrix} associated to a permutation $w\in S_n$ is obtained by placing dots in positions $(w(i),i)$, for all $1\leqslant i\leqslant n$, in the array.
The code $\alpha=\code(w)$ admits the following interpretation: $\alpha_{i}$ is the number of dots in the region strictly above and to the right of the dot in the $i$-th column of the permutation matrix.

This data allows us to represent any permutation $w\in S_{n}$ as a diagram inside a staircase shape $(n-1) \times (n-1)$ by pilling up the parts $\alpha_i$ of the composition $\alpha$. In this way, the diagram of $w$ is the collection of left-justified boxes where the $i$-th row counted from bottom to top contains $\alpha_{i}$ boxes \footnote{The diagram of an element $w$ corresponds to the bottom RC-graph of $w$. It may be also given as a left-justified version of the corresponding Rothe diagram}. 

Indeed, the Lehmer code provides a bijection between $S_n$ and $\mathcal{C}_n$.

\begin{lem}[\cite{Man01}, Proposition 2.1.2]\label{lem:perm_by_diagram}
A permutation is determined by its code and, therefore, by its diagram. 
\end{lem}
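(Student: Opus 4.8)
The plan is to show that the code map $\code \colon S_n \to \mathcal{C}_n$ is injective and then promote this to a bijection by a cardinality count; the passage from code to diagram is then immediate, since the diagram merely records the parts $\alpha_i$ as rows of left-justified boxes. I would phrase the whole argument intrinsically, reconstructing $w$ position by position from the sequence $\alpha = \code(w)$.

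The crux is a single observation about the meaning of each entry. Fix a position $i$ and consider the set of values that have not yet appeared to the left, namely $R_i = \{1, \dots, n\} \setminus \{w(1), \dots, w(i-1)\}$. Since $w$ is a bijection, $R_i = \{w(i), w(i+1), \dots, w(n)\}$, and the defining formula $\alpha_i = \#\{k > i : w(k) < w(i)\}$ counts exactly those elements of $R_i$ that are strictly smaller than $w(i)$. Hence $w(i)$ is precisely the $(\alpha_i + 1)$-th smallest element of $R_i$. This identification is where a little care is warranted, but it is clear once one notes that the positions $1, \dots, i-1$ carry values disjoint from those at positions $i, \dots, n$. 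With it in hand, an easy induction on $i$ recovers $w$ from $\alpha$: starting from $R_1 = \{1,\dots,n\}$, the value $w(i)$ is forced as the $(\alpha_i+1)$-th smallest element of $R_i$, and $R_{i+1} = R_i \setminus \{w(i)\}$. Because every value is thus determined by $\alpha$, any two permutations sharing the same code must agree, so $\code$ is injective.

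To upgrade injectivity to bijectivity I would compare cardinalities. On one side $|S_n| = n!$, while on the other the definition of $\mathcal{C}_n$ as the elements of $[0,n-1]\times\cdots\times[0,1]$ gives $|\mathcal{C}_n| = \prod_{i=1}^{n-1}(n-i+1) = n!$, and it is here that the constraint $\alpha_i \leqslant n-i$ enters the argument. An injection between finite sets of equal cardinality is automatically a bijection, which settles the claim; equivalently, the reconstruction above defines a two-sided inverse, the same inequality guaranteeing that $\alpha_i + 1 \leqslant |R_i| = n-i+1$ so that the procedure never runs out of available values. Finally, the \emph{therefore, by its diagram} clause is free: the diagram of $w$ is constructed so that its $i$-th row contains $\alpha_i$ boxes, so the diagram and the code encode identical data, and determination by one is equivalent to determination by the other. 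The argument is entirely elementary, with no genuine obstacle beyond the bookkeeping of the key observation on the rank of $w(i)$ within $R_i$.
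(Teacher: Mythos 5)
Your proof is correct and follows essentially the same route as the paper (and Manivel's Proposition 2.1.2, which it cites): the key observation that $w(i)$ is the $(\alpha_i+1)$-th smallest value not used in positions $1,\dots,i-1$ is exactly the reconstruction procedure the paper illustrates with its path-drawing diagrams, and your cardinality count $|\mathcal{C}_n| = n!$ only adds the bijectivity claim stated in the surrounding text. No gaps; the argument is complete.
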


For instance, consider $w=5\,7\, 6 \,2\,1\,8\,3\,4 \in S_{8}$. The code $\code(w) =(4,5,4,1,0,2,0)$ is represented by its diagram in Figure \ref{fig:ex_diag} (left). In the sequence, the corresponding permutation matrix of $w$ in Figure \ref{fig:ex_diag}(right).

\begin{figure}[hbtp]
\centering
\includegraphics[scale=1]{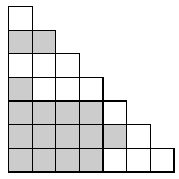}
\hspace{1cm}
\includegraphics[scale=1]{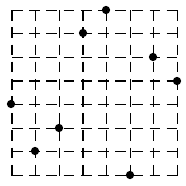}
\caption{On the left, diagram of $w=5\,7\, 6 \,2\,1\,8\,3\,4$. On the right, its permutation matrix.}
\label{fig:ex_diag}
\end{figure}

If one has a composition (the code), there is a nice way to illustrate the procedure described by Manivel (\cite{Man01}, Chapter 2) to determine the permutation in the diagram. For each row $i$, beginning from the bottom to the top, write a path along the bottom edges of the boxes starting from the left and finishing with the right side edge at the last box, such that $w(i)$ is the label of the last top arrow of the path assigning the numbers from $1$ to $n$ omitting $w(1), \ldots, w(i-1)$. For instance, the Figure \ref{fig:ex_comp_diag} shows the first three steps to recover the permutation whose composition is $\alpha =(4,5,4,1,0,2,0)$.

\begin{figure}[hbtp]
\centering
\includegraphics[scale=1]{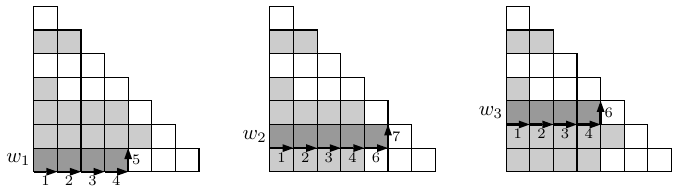}
\caption{Recovering the permutation from $\alpha=(4,5,4,1,0,2,0)$.}
\label{fig:ex_comp_diag}
\end{figure}

This arrangement of boxes provides an easy way to express the permutation $w$ in terms of simple reflections $s_{i}$, which we will call the row-reading expression of $w$. We begin assigning a simple reflection consecutively to each box from left to right and upwards, starting from $s_{1}$ in the bottom leftmost box in the staircase shape. Then, we obtain a reduced decomposition by reading each row in the diagram from right to left, and the rows from bottom to top. More specifically, define the reading for the $i$-th row by $\rr{w}_{i}= s_{\alpha_{i}+i-1} \cdot s_{\alpha_{i}+i-2} \cdots s_{i+1} \cdot s_{i}$ if $\alpha_{i}$ is non-zero, and $\rr{w}_{i} = e$ otherwise. The row-reading expression $\rr{w}$ of $w$ is
\begin{equation*}
\rr{w} = \rr{w}_{1}\cdots \rr{w}_{n-1}.
\end{equation*}

Notice this row-reading resembles that described by Manivel (see \cite{Man01}, Remark 2.1.9). For instance, the row-reading expression of $w=5\,7\, 6 \,2\,1\,8\,3\,4$ is $\rr{w}=s_{4} s_{3} s_{2} s_{1}\cdot s_{6}s_{5}s_{4}s_{3}s_{2}  \cdot s_{6}s_{5}s_{4}s_{3} \cdot s_{4} \cdot s_{7}s_{6}$ which can be obtained from Figure \ref{fig:rowreading} by reading each row from right to left, beginning from the bottom.

\begin{figure}[ht]
\centering
\includegraphics[scale=1]{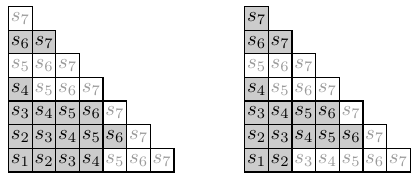}
\caption{The row-reading of $w=5\,7\, 6 \,2\,1\,8\,3\,4$ on the left and of $w' = 3\,7\, 6 \,2\,1\,8\,5\,4$ on the right.}
\label{fig:rowreading}
\end{figure}

Recall that if $w$ covers $w'$ such that $w=s_{1}\cdots s_{\ell}$ is a reduced decomposition then there is an integer in $\I\in[\ell]$ such that $w' = s_{1} \cdots \widehat{s_{\I}}\cdots s_{\ell}$.
Let us denote by $\rrt{w}{\I}$ this reduced decomposition of $w'$ obtained by removing the $\I$-th simple reflection from the row-reading $\rr{w}$.

For instance, by Lemma \ref{lem:bjorner}, $w = 5\,7\, 6 \,2\,1\,8\,3\,4$ covers $w' = \mathbf{3}\,7\, 6 \,2\,1\,8\,\mathbf{5}\,4$. Moreover, $\rrt{w}{2}=s_{4} \widehat{s_{3}} s_{2} s_{1}\cdot s_{6}s_{5}s_{4}s_{3}s_{2}  \cdot s_{6}s_{5}s_{4}s_{3} \cdot s_{4} \cdot s_{7}s_{6}$ is a reduced decomposition of $w'$ obtained from $\rr{w}$. However, it differs from the row-reading $\rr{w'}$ since $\code(w') = (2,5,4,1,0,2,1)$ and $\rr{w'}=s_{2} s_{1}\cdot s_{6}s_{5}s_{4}s_{3}s_{2}  \cdot s_{6}s_{5}s_{4}s_{3} \cdot s_{4} \cdot s_{7}s_{6}\cdot s_{7}$ (cf. Figure \ref{fig:rowreading}).
The Word Property guarantees that there is a sequence of moves that transforms $\rrt{w}{2}$ into $\rr{w'}$; in this case, $s_{4}$ in the first row turns into $s_{7}$ in the seventh row after a sequence of moves.

There is a known characterization of covering relations in terms of the one-line notation of permutations $w$ and $w'$ (see Lemma~\ref{lem:bjorner}). However, our goal is to establish a relationship that also takes into account the reduced decompositions, alongside the permutation structure. Since the Lehmer code of a permutation encodes both the length and information about its reduced decompositions, we aim to characterize covering relations directly in terms of these codes.

As a motivation, consider the following situation. Let $w=5\,7\, 6 \,2\,1\,8\,3\,4 \in S_8$ with $\code(w) = (4,5,4,1,0,2,0)$. As a direct application of Lemma \ref{lem:bjorner}, it is immediate that $w$ covers both $w'$ $=3\,7\, 6 \,2\,1\,8\,5\,4$ and $w''$ $=5\,7\, 4 \,2\,1\,8\,3\,6$, where $w=w'\cdot(1,7)$ and $w=w''\cdot(3,8)$. Notice also that $\alpha'=\code(w') = (2,5,4,1,0,2,1)$ and $\alpha''=\code(w'')= (4,5,3,1,0,2,0)$. For the pair $w,w''$, the only distinction between their codes is $\alpha''_3=\alpha_3-1$, while for the pair $w,w'$, the distinction is $\alpha'_1=\alpha_1-2$ and $\alpha''_7=\alpha_{7}+1$. Hence, for the pair $w,w''$, their codes differ only at one position with $\alpha''_i$ equals $\alpha_i-1$. However, for the pair $w,w^{'}$, their codes differ at two positions and it is not clear a priori how the values in those positions are related. It reveals that the Bruhat order given in terms of the Lehmer code sometimes is manifested by changes in two positions of the code.

What happens for the pair $w,w''$ is justified by the following lemma.

\begin{lem}\label{lem:covering}
Let $w,w'\in S_{n}$ and denote by $\alpha = \code(w)$ and $\alpha' = \code(w')$. If there exists $i$ such that $\alpha_{i}' = \alpha_{i} - 1$ and $\alpha_{k}' = \alpha_{k} \mbox{ for every } k \neq i$ then $w$ covers $w'$.
\end{lem}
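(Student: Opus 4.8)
The plan is to bypass the one-line characterization of Lemma \ref{lem:bjorner} and argue directly with the row-reading expressions, since the hypothesis is already phrased in terms of codes. First I would record the two immediate numerical consequences of the hypothesis. Because $\ell(v)$ equals the number of inversions of $v$, which is $|\code(v)|$, we get $\ell(w') = |\alpha'| = |\alpha| - 1 = \ell(w) - 1$; and because $\alpha'_i = \alpha_i - 1 \geqslant 0$ we have $\alpha_i \geqslant 1$, so the $i$-th row of the diagram of $w$ is non-empty and its reading is a genuine (nonempty) product of simple reflections.

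The central step is to compare the row-reading expressions $\rr{w}$ and $\rr{w'}$. Using the formula $s_{\beta_k + k - 1} \cdots s_{k+1} s_k$ for the $k$-th row of the reading of a permutation with code $\beta$, the equalities $\alpha'_k = \alpha_k$ for $k \neq i$ give $\rr{w'}_k = \rr{w}_k$ for every $k \neq i$, while for the $i$-th row
\[
\rr{w'}_i = s_{\alpha_i + i - 2} \cdots s_{i+1} s_i
\]
is exactly $\rr{w}_i = s_{\alpha_i+i-1}\, s_{\alpha_i+i-2}\cdots s_i$ with its leftmost factor $s_{\alpha_i+i-1}$ deleted. Hence $\rr{w'}$ is obtained from $\rr{w}$ by removing a single simple reflection. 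Here I rely on Lemma \ref{lem:perm_by_diagram} to know that this shortened word really is the row-reading of the unique permutation with code $\alpha'$, namely $w'$, and on the construction of the row-reading to know that both $\rr{w}$ and $\rr{w'}$ are \emph{reduced} decompositions.

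To finish, I would invoke the subword characterization of the Bruhat order used to define it in the Preliminaries: since $\rr{w}$ is a reduced word for $w$ and $\rr{w'}$ is the subword obtained by omitting one letter, $w'$ appears as a subword of a reduced decomposition of $w$, whence $w' \leqslant w$. Combined with $\ell(w) = \ell(w') + 1$, this is precisely the statement that $w$ covers $w'$.

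I do not expect a genuine obstacle here: the machinery established so far (the explicit row-reading formula, its reducedness, and the code-to-permutation bijection) does all the heavy lifting, and the deletion of a single letter reduces covering to the subword property. The only points demanding care are bookkeeping ones, namely confirming that deleting the \emph{leftmost} letter of row $i$ — rather than some other letter — yields $\rr{w'}$, and checking the boundary case $\alpha_i = 1$, where $\rr{w'}_i$ degenerates to the empty word. As an alternative route one could instead locate the explicit transposition $(i,j)$ with $w = w'\cdot(i,j)$ from the permutation matrix and verify the hypotheses of Lemma \ref{lem:bjorner} directly, but the row-reading argument is shorter and keeps everything intrinsic to the code.
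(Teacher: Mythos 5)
Your proof is correct and is essentially the paper's own argument: the paper likewise observes that $\rr{w}_{i}=s_{\alpha_{i}+i-1}\cdot \rr{w}_{i}'$ while all other rows agree, so that $\rr{w'}=\rrt{w}{I}$ for $I=\left(\sum_{k=1}^{i-1}\alpha_{k}\right)+1$, i.e., the row-reading of $w'$ is exactly $\rr{w}$ with the leftmost letter of row $i$ deleted. Your extra bookkeeping (reducedness of both words, the subword characterization of the Bruhat order, and the length count $|\alpha'|=|\alpha|-1$) only makes explicit what the paper's proof leaves implicit.
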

\begin{proof}
Let us show that for some $I\in [\ell(w)]$ the reduced decompositions $\rrt{w}{I}$ and $\rr{w'}$ of $w'$ are equal.
We have that $\rr{w}_{i}=s_{\alpha_{i}+i-1}\cdot \rr{w}_{i}'$, and $\rr{w}_{k}=\rr{w}_{k}'$ for every $k\neq i$. Then, for $I =\left(\sum_{k=1}^{i-1} \alpha_{k}\right) +1$, we have $\rr{w'} = \rr{w}_{1}'\cdots \rr{w}_{i-1}' \cdot \widehat{s}_{\alpha_{i}+i-1} \cdot \rr{w}_{i}'\cdots \rr{w}_{n-1}'= \rrt{w}{I}$.
\end{proof}

As it was already noticed, the pair $w,w'$ in the above example shows that the converse of Lemma \ref{lem:covering} isn't true. In the next section \ref{sec:newposet}, we will introduce a poset in the set $\mathcal{C}_{n}$ that will solve this question. In Section \ref{sec:equivalence} we will show that such poset is equivalent to the (strong) Bruhat order in $S_{n}$.

\section{An intrinsic covering relation for compositions}\label{sec:newposet}

In this section, we describe a covering relation for compositions that provides a poset structure for $\mathcal{C}_{n}$. We highlight that all results obtained in this section don't require any mention of the permutation, using only information available from compositions.

Let $\alpha=(\alpha_{1},\dots, \alpha_{m})$ be a composition in $\N^{m}$. Consider $N(\alpha)$ the integer given by  $N(\alpha)=\max_{\alpha_{i}\neq 0}\{\alpha_{i}+i\}$. Notice that $\alpha$ lies in $\mathcal{C}_{N(\alpha)}$. Moreover, $N(\alpha)$ is the smaller with such property.

Given $i\in[m]$ and $j \in \N$, define $c_{i,j}(\alpha)$ to be the integer defined recursively with respect to $j$ as follows:
\begin{itemize}
\item If $j\leqslant i+1$, then $c_{i,j}(\alpha) = 0$;
\item If $j>i+1$, then $c_{i,j}(\alpha) = c_{i,j-1}(\alpha) + \left\{
\begin{array}{cl}
1, & \mbox{if } \alpha_{j-1} < \alpha_{i} - c_{i,j-1}(\alpha);\\
0, & \mbox{if } \alpha_{j-1} \geqslant \alpha_{i} - c_{i,j-1}(\alpha).
\end{array}\right.$
\end{itemize}

This defines a matrix $c(\alpha)$ with infinite entries.
For instance, $c(\alpha)$ for the composition $\alpha = (4,5,4,1,0,2,0)$ is  
\begin{equation*}
c(\alpha) =
\left[
\begin{array}{ccccccccccc}
0 & 0 & 0 & 0 & 1 & 2 & 2 & 3 & 4 & 4 &\cdots \\ 
0 & 0 & 0 & 1 & 2 & 3 & 3 & 4 & 5 & 5 &\cdots \\ 
0 & 0 & 0 & 0 & 1 & 2 & 2 & 3 & 4 & 4 &\cdots \\ 
0 & 0 & 0 & 0 & 0 & 1 & 1 & 1 & 1 & 1 &\cdots \\ 
0 & 0 & 0 & 0 & 0 & 0 & 0 & 0 & 0 & 0 &\cdots \\ 
0 & 0 & 0 & 0 & 0 & 0 & 0 & 1 & 2 & 2 &\cdots \\ 
0 & 0 & 0 & 0 & 0 & 0 & 0 & 0 & 0 & 0 &\cdots
\end{array} 
\right]
\end{equation*}

The quantity $c_{i,j}(\alpha)$ can be interpreted in terms of certain paths within the diagram of $\alpha$. For each fixed index $i$, we construct a polygonal path in the diagram of $\alpha$ according to the following steps:
\begin{description}
\item[Step 1] Start at the last box of the $i$-th row;
\item[Step 2] If the box directly above is empty, move diagonally to the next box northeast of the current position;
\item[Step 3] If the box directly above is filled, move vertically to the next box directly above;
\item[Step 4] Repeat this process for each row until the top of the diagram is reached.
\end{description}

The value $c_{i,j}(\alpha)$ is then the number of steps the polygonal path moves to the left from the $i$-th row up to the $(j-1)$-th row. For instance, Figure \ref{fig:exampleD} illustrates this process for the composition $\alpha = (4,5,4,1,0,2,0)$, showing how to compute $c_{i,j}(\alpha)$.

\begin{figure}[ht]
\centering
\includegraphics[scale=0.8]{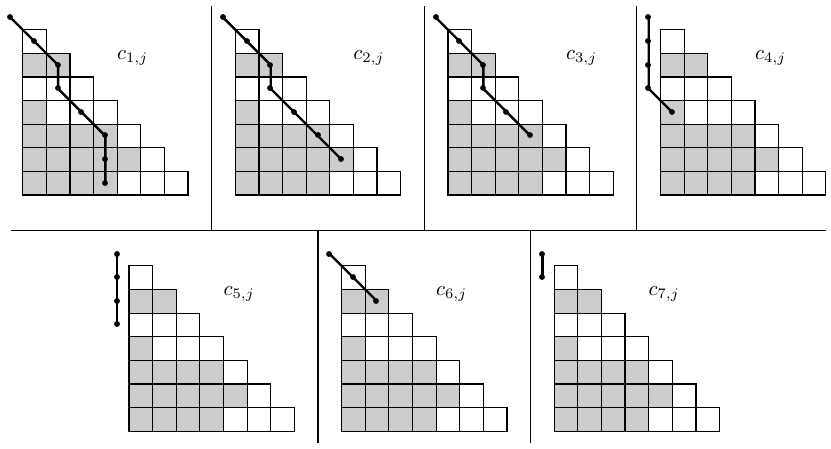}
\caption{Geometric interpretation of $c_{i,j}(\alpha)$ in the diagram of $\alpha = (4,5,4,1,0,2,0)$.}
\label{fig:exampleD}
\end{figure}

The following lemma determines some bounds for the entries of $c(\alpha)$. 

\begin{lem}\label{lem:c_properties} Let $\alpha=(\alpha_{1},\dots, \alpha_{m})$ be  a composition. Then,
\begin{enumerate}
\item $0\leqslant c_{i,j}(\alpha) \leqslant c_{i,j+1}(\alpha) \leqslant \alpha_{i}$ for every $i$ and $j$;
\item $\max\{0,\min\{\alpha_{i},\alpha_{i}+j-N(\alpha)-1\}\} \leqslant c_{i,j}(\alpha) \leqslant \min\{\alpha_{i},j-i-1\}$;
\item $N(\alpha)+1$ is the smallest index for $j$ such that $c_{i,j}(\alpha) = \alpha_{i}$ for every $i$.
\end{enumerate}
\end{lem}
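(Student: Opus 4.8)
The plan is to prove the three items in order, since (2) rests on the induction behind (1), and (3) is read off from (2). The only external input is the definition $N(\alpha)=\max_{\alpha_{k}\neq 0}\{\alpha_{k}+k\}$, which I will use throughout in the equivalent pointwise form $\alpha_{k}\leqslant N(\alpha)-k$ for every $k$ with $\alpha_{k}\neq 0$. For \emph{Part (1)}, I would establish the whole chain $0\leqslant c_{i,j}(\alpha)\leqslant c_{i,j+1}(\alpha)\leqslant \alpha_{i}$ by a single induction on $j$ with $i$ fixed. Nonnegativity and weak monotonicity are immediate, because passing from $c_{i,j-1}(\alpha)$ to $c_{i,j}(\alpha)$ adds either $0$ or $1$. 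The only case needing attention for $c_{i,j}(\alpha)\leqslant\alpha_{i}$ is when the increment is $1$: this occurs exactly when $\alpha_{j-1}<\alpha_{i}-c_{i,j-1}(\alpha)$, which forces $c_{i,j-1}(\alpha)<\alpha_{i}$, hence $c_{i,j-1}(\alpha)\leqslant\alpha_{i}-1$ for integers, so $c_{i,j}(\alpha)=c_{i,j-1}(\alpha)+1\leqslant\alpha_{i}$.

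\emph{Part (2), upper bound.} The estimate $c_{i,j}(\alpha)\leqslant\alpha_{i}$ is Part (1). The estimate $c_{i,j}(\alpha)\leqslant j-i-1$, meaningful for $j\geqslant i+1$, holds because $c_{i,i+1}(\alpha)=0$ and the recursion raises the value by at most one at each of the $j-i-1$ steps from $i+1$ to $j$. \emph{Part (2), lower bound.} The idea is to track the deficiency $d_{i,j}:=\alpha_{i}-c_{i,j}(\alpha)\geqslant 0$ and to show, by induction on $j$, that
\[
d_{i,j}\leqslant \max\{0,\ N(\alpha)-j+1\}.
\]
Translating back through $c_{i,j}(\alpha)=\alpha_{i}-d_{i,j}$ and combining with $c_{i,j}(\alpha)\geqslant 0$ gives exactly $\max\{0,\min\{\alpha_{i},\alpha_{i}+j-N(\alpha)-1\}\}\leqslant c_{i,j}(\alpha)$.

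For this induction the recursion for the deficiency reads $d_{i,j}=d_{i,j-1}-1$ when $\alpha_{j-1}<d_{i,j-1}$, and $d_{i,j}=d_{i,j-1}$ otherwise. The base case $j=i+1$ is $d_{i,i+1}=\alpha_{i}\leqslant\max\{0,N(\alpha)-i\}$, true since either $\alpha_{i}=0$, or $\alpha_{i}\neq 0$ and then $\alpha_{i}\leqslant N(\alpha)-i$. In the decreasing case, a strict drop forces $d_{i,j-1}\geqslant 1$, so the inductive bound gives $d_{i,j-1}\leqslant N(\alpha)-j+2$ (the maximum is realized by its second argument), whence $d_{i,j}=d_{i,j-1}-1\leqslant N(\alpha)-j+1$. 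In the stationary case one uses the blocking condition $\alpha_{j-1}\geqslant d_{i,j-1}$: if $\alpha_{j-1}=0$ then $d_{i,j-1}=0$, and if $\alpha_{j-1}\neq 0$ then $d_{i,j-1}\leqslant\alpha_{j-1}\leqslant N(\alpha)-(j-1)=N(\alpha)-j+1$; either way the target is preserved. Keeping this case analysis honest, in particular the interplay between the sign of $N(\alpha)-j+1$ and whether $\alpha_{j-1}$ vanishes, is the main obstacle of the whole lemma.

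\emph{Part (3).} Both assertions now follow from (1) and (2). Evaluating the lower bound of (2) at $j=N(\alpha)+1$ gives $\max\{0,\min\{\alpha_{i},\alpha_{i}\}\}=\alpha_{i}$, while (1) supplies the matching upper bound $\alpha_{i}$; hence $c_{i,N(\alpha)+1}(\alpha)=\alpha_{i}$ for every $i$. Since each $c_{i,j}(\alpha)$ is nondecreasing and bounded by $\alpha_{i}$, saturation persists, so the set of indices $j$ for which all rows equal $\alpha_{i}$ is upward closed. It remains to rule out $j=N(\alpha)$: choosing $i^{*}$ attaining the maximum, so $\alpha_{i^{*}}+i^{*}=N(\alpha)$ with $\alpha_{i^{*}}\neq 0$, the upper bound of (2) yields $c_{i^{*},N(\alpha)}(\alpha)\leqslant\min\{\alpha_{i^{*}},N(\alpha)-i^{*}-1\}=\alpha_{i^{*}}-1<\alpha_{i^{*}}$, so row $i^{*}$ is not yet saturated. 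Thus $N(\alpha)+1$ is the smallest index with the required property. The degenerate case $\alpha=0$, where $N(\alpha)$ is vacuous, is set aside.
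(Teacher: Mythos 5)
Your proof is correct and takes essentially the same route as the paper: the same observation that an increment can only occur while $c_{i,j}(\alpha)<\alpha_{i}$ for (1), the same induction for the lower bound in (2) (your deficiency bound $d_{i,j}\leqslant\max\{0,N(\alpha)-j+1\}$ is just the paper's inductive claim $\alpha_{i}+j-c_{i,j}(\alpha)\leqslant N(\alpha)+1$ rewritten, with the same increment/no-increment case split using the definition of $N(\alpha)$), and the same saturation-plus-maximizing-index argument for (3). If anything, your bookkeeping via $\max\{0,\cdot\}$ treats the degenerate cases ($\alpha_{i}=0$, or $j>N(\alpha)+1$) more explicitly than the paper does.
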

\begin{proof}
Assume that $I$ is an index satisfying $\alpha_{I}\neq 0$ and $\alpha_{I}+I=N(\alpha)$.

For assertion (1), it follows directly by definition that $0\leqslant c_{i,j}(\alpha) \leqslant c_{i,j+1}(\alpha)$.
If $c_{i,j}(\alpha)=\alpha_{i}$ then $\alpha_{i} - c_{i,j}(\alpha) = 0 \leqslant \alpha_{j}$. Hence, by definition, $c_{i,j+1}(\alpha) = c_{i,j}(\alpha) +0=\alpha_{i}$. This proves that $c_{i,j}(\alpha)\leqslant\alpha_{i}$.

Let us prove affirmation (2). By definition of $c_{i,j}$ and (1), $c_{i,j}(\alpha)\leqslant \min\{\alpha_{i}, j-i-1\}$.

Now, we will prove that $c_{i,j}(\alpha) \geqslant \min\{\alpha_{i},\alpha_{i}+j-N(\alpha)-1\}$.
If $\alpha_{i} \leqslant \alpha_{i}+j-N(\alpha)-1$ then the inequality holds trivially.

Suppose that $\alpha_{i}+j-N(\alpha)-1<\alpha_{i}$, i.e., $j<N(\alpha)+1$. Let us prove by induction in $j$ that $\alpha_{i}+j-c_{i,j}(\alpha)\leqslant  N(\alpha)+1$. If $j = i+1$ then $\alpha_{i}+(i+1)-c_{i,i+1}(\alpha) = \alpha_{i}+i+1\leqslant N(\alpha)+1$.

Assume that $\alpha_{i}+j-c_{i,j}(\alpha)\leqslant N(\alpha)+1$. If $\alpha_{j}<\alpha_{i}-c_{i,j}(\alpha)$ then it follows by definition of $c_{i,j+1}$ and inductive hypothesis that $\alpha_{i}+(j+1)-c_{i,j+1}(\alpha) = \alpha_{i}+j-c_{i,j}(\alpha)  \leqslant  N(\alpha)+1$. On the other hand, if $\alpha_{j}\geqslant \alpha_{i}-c_{i,j}(\alpha)$ then $\alpha_{i}+(j+1)-c_{i,j+1}(\alpha) = \alpha_{i}+j+1-c_{i,j}(\alpha) \leqslant \alpha_{j}+j+1 \leqslant N(\alpha)+1$.

We conclude that $c_{i,j}(\alpha) \geqslant \min\{\alpha_{i},\alpha_{i}+j-N(\alpha)-1\}$. Since $c_{i,j}(\alpha)$ is non-negative, we get assertion (2).

To prove affirmation (3), apply $j = N(\alpha) +1$ into (2) to get $\alpha_{i}\leqslant c_{i,N(\alpha)+1}(\alpha)$ for every $i$. Finally, such $j$ is minimal because $c_{I,N(\alpha)}(\alpha) \leqslant\min\{\alpha_{I},N(\alpha) - I - 1\}=\alpha_{I}-1$.
\end{proof}

Given $\alpha\in \mathcal{C}_{n}$, we have that $N(\alpha) \leqslant n$. Then, Lemma \ref{lem:c_properties} allows us to describe $c(\alpha)$ as an $(n-1)\times (n+1)$ matrix for every $\alpha \in \mathcal{C}_{n}$, where we may add a few repeated columns to the right or zero rows below it.

The next lemma establishes some conditions for comparing $c_{i,j}$ for some range of $j$.

\begin{lem}\label{lem:compare_comp}
Suppose that $\alpha$ and $\tilde\alpha$ are compositions satisfying that $\tilde{\alpha}_{i}\leqslant \alpha_{i}$ and there exists $l>i$ such that $\tilde\alpha_{k}\geqslant\alpha_{k}$ for every $k\in [i+1,l]$. Then, $c_{i,k}(\tilde\alpha)\leqslant c_{i,k}(\alpha)$, for any $k\in [i+1,l+1]$.
\end{lem}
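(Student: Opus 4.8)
The plan is to prove the inequality by induction on $k$, running from $k=i+1$ up to $k=l+1$. The base case $k=i+1$ is immediate, since $c_{i,i+1}(\tilde\alpha)=0=c_{i,i+1}(\alpha)$ straight from the definition. For the inductive step, I assume $c_{i,k}(\tilde\alpha)\leqslant c_{i,k}(\alpha)$ for some $k$ with $i+1\leqslant k\leqslant l$, and deduce the same inequality for $k+1$. Throughout, I use that $c_{i,j}$ is non-decreasing in $j$ (Lemma \ref{lem:c_properties}(1)), which is just the statement that the recursive increment is $0$ or $1$.

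Writing out the recursion, the increment $c_{i,k+1}(\alpha)-c_{i,k}(\alpha)$ equals $1$ precisely when $\alpha_{k}<\alpha_{i}-c_{i,k}(\alpha)$ and $0$ otherwise, and likewise for $\tilde\alpha$. I would then split into two cases according to the inductive hypothesis. If $c_{i,k}(\tilde\alpha)<c_{i,k}(\alpha)$, the gap is already at least $1$, so even when the $\tilde\alpha$-recursion increments and the $\alpha$-recursion does not, one still has $c_{i,k+1}(\tilde\alpha)\leqslant c_{i,k}(\tilde\alpha)+1\leqslant c_{i,k}(\alpha)\leqslant c_{i,k+1}(\alpha)$. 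This case is free and uses nothing beyond monotonicity.

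The critical case, and the only place where both hypotheses on the parts enter, is when $c_{i,k}(\tilde\alpha)=c_{i,k}(\alpha)=:c$. Here it suffices to show that whenever the $\tilde\alpha$-recursion increments, so does the $\alpha$-recursion; that is, $\tilde\alpha_{k}<\tilde\alpha_{i}-c$ should force $\alpha_{k}<\alpha_{i}-c$. This follows from the chain $\alpha_{k}\leqslant\tilde\alpha_{k}<\tilde\alpha_{i}-c\leqslant\alpha_{i}-c$, in which the first inequality is the hypothesis $\tilde\alpha_{k}\geqslant\alpha_{k}$ (valid because $k\in[i+1,l]$) and the last is $\tilde\alpha_{i}\leqslant\alpha_{i}$. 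Hence the increments compare correctly and $c_{i,k+1}(\tilde\alpha)\leqslant c_{i,k+1}(\alpha)$, closing the induction.

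I do not expect a genuine obstacle: the statement is a monotonicity property of the recursion, and the two structural hypotheses — lowering the starting part $\alpha_{i}$ and raising the intermediate parts $\alpha_{k}$ — each push $c_{i,k}$ in the same downward direction. The one point requiring care is keeping the index ranges aligned, so that the comparison $\tilde\alpha_{k}\geqslant\alpha_{k}$ is invoked only for $k\in[i+1,l]$; this is exactly the range supplied by the step from $k$ to $k+1$ under the constraint $k+1\leqslant l+1$.
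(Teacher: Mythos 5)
Your proof is correct and takes essentially the same route as the paper's: induction on $k$, with the slack case $c_{i,k}(\tilde\alpha)<c_{i,k}(\alpha)$ closed by monotonicity of the recursion alone, and the tight case closed by the chain $\alpha_{k}\leqslant\tilde\alpha_{k}<\tilde\alpha_{i}-c_{i,k}(\tilde\alpha)\leqslant\alpha_{i}-c_{i,k}(\alpha)$, which forces the $\alpha$-recursion to increment whenever the $\tilde\alpha$-recursion does. The paper phrases its case split slightly differently (comparing $c_{i,k}(\alpha)-c_{i,k}(\tilde\alpha)$ against $\alpha_{i}-\tilde\alpha_{i}$ rather than against $0$, then listing four sub-cases), but the underlying increment comparison is identical to yours.
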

\begin{proof}
We will prove inductively for $k\in [i+1,l+1]$. If $k=i+1$ then it is trivially true by definition. Assume that the claim is true for $k\in [i+2,l]$. We can consider the following two different cases: either $c_{i,k}(\alpha) - c_{i,k}(\tilde\alpha)>\alpha_{i}-\tilde\alpha_{i}$ or $c_{i,k}(\alpha) - c_{i,k}(\tilde\alpha)\leqslant\alpha_{i}-\tilde\alpha_{i}$.

Suppose that $c_{i,k}(\alpha) - c_{i,k}(\tilde\alpha)>\alpha_{i}-\tilde\alpha_{i}$. Then, $c_{i,k}(\tilde\alpha) < c_{i,k}(\alpha)$ since $\tilde\alpha_{i}\leqslant\alpha_{i}$. Hence, $c_{i,k+1}(\tilde\alpha) \leqslant c_{i,k+1}(\alpha)$.

Suppose that $c_{i,k}(\alpha) - c_{i,k}(\tilde\alpha)\leqslant\alpha_{i}-\tilde\alpha_{i}$. Consider the following cases:
\begin{itemize}
\item If  $c_{i,k+1}(\alpha) = c_{i,k}(\alpha)+1$ then $c_{i,k+1}(\alpha)=c_{i,k}(\alpha)+1 \geq c_{i,k}(\tilde{\alpha})+1\geq c_{i,k+1}(\tilde{\alpha})$;

\item If $c_{i,k+1}(\tilde\alpha) = c_{i,k}(\tilde\alpha)$ then $c_{i,k+1}(\tilde\alpha) = c_{i,k}(\tilde\alpha)\leq  c_{i,k}(\alpha) \leq c_{i,k+1}(\alpha)$;

\item If $c_{i,k+1}(\alpha) = c_{i,k}(\alpha)$ then $\tilde\alpha_{k} \geqslant \alpha_{k} \geqslant \alpha_{i}- c_{i,k}(\alpha) \geqslant \tilde\alpha_{i}- c_{i,k}(\tilde\alpha)$. Thus, $c_{i,k+1}(\tilde\alpha) = c_{i,k}(\tilde\alpha)\leqslant c_{i,k}(\alpha) = c_{i,k+1}(\alpha)$;

\item If $c_{i,k+1}(\tilde\alpha) = c_{i,k}(\tilde\alpha)+1$ then $\alpha_{k} \leqslant \tilde\alpha_{k} < \tilde\alpha_{i}- c_{i,k}(\tilde\alpha) \leqslant \alpha_{i}- c_{i,k}(\alpha)$. Thus, $c_{i,k+1}(\alpha) = c_{i,k}(\alpha)+1\geqslant c_{i,k}(\tilde\alpha)+1=c_{i,k+1}(\tilde{\alpha})$. \qedhere
\end{itemize}
\end{proof}

Given two compositions $\alpha$ and $\alpha'$ such that $|\alpha|=|\alpha'|+1$, we say that $\alpha$ covers $\alpha'$ if there exist positive integers $i<j$ satisfying the following four conditions:
\begin{align}
\alpha_{i}' &\leqslant \alpha_{i} - 1;\label{eq:a1}\tag{a1}\\
\alpha_{j}' & =  \alpha_{j} + \alpha_{i}  -\alpha_{i}'  - 1;\label{eq:a2}\tag{a2}\\
\alpha_{k}' & = \alpha_{k} \mbox{ for every } k \neq i \mbox{ and } k \neq j;\label{eq:a3}\tag{a3}\\
c_{i,j}(\alpha) & = c_{i,j}(\alpha') = \alpha_{i}'- \alpha_{j}.\label{eq:a4}\tag{a4}
\end{align}

We can emphasize the pair $(i,j)$ saying that $\alpha$ covers $\alpha'$ in positions $(i,j)$.

For instance, suppose that $\alpha = (4,5,4,1,0,2,0)$ and $\alpha' = (2,5,4,1,0,2,1)$. Then, we can see that $i=1$ and $j=7$ satisfy conditions \eqref{eq:a1}, \eqref{eq:a2}, and \eqref{eq:a3}. We also have that $c_{1,j}(\alpha')=(0,0,0,0,1,2,2,2,2)$. Then, $c_{1,7}(\alpha') = c_{1,7}(\alpha) = 2 = \alpha_{1}'-\alpha_{7}$. We conclude that $\alpha$ covers $\alpha'$ in positions $(1,7)$.

Notice that the existence of covering pairs provides a partial order relation for $\mathcal{C}_n$ by transitivity (for details, see Stanley \cite{Sta11}, Chapter 3). Denote by $\mathcal{PC}_{n}$ the poset of $\mathcal{C}_{n}$ given by the covering relations defined above. We will denote this order relation by $\preceq$ to distinguish it from $\leqslant$ of the product order inherited from $\mathbb{N}^m$.

If $\alpha$ covers $\alpha'$ in positions $(i,j)$, we observe the pair $\alpha$ and $\alpha'$ satisfy the conditions of Lemma \ref{lem:compare_comp} from which follows that $c_{i,k}(\alpha') \leqslant c_{i,k}(\alpha)$ for any $k>i$. However, conditions \eqref{eq:a1} to \eqref{eq:a4} provide more information to compare both $c_{i,k}(\alpha')$ and $c_{i,k}(\alpha)$, as we describe in the next proposition.

\begin{prop}\label{prop:moves_property}
Suppose that $\alpha$ covers $\alpha'$ in position $(i,j)$. Then,
\begin{enumerate}
\item[(i)] $c_{i,k}(\alpha) = c_{i,k}(\alpha')$ for every $k\in [i+1,j]$;
\item[(ii)] $c_{i,j+1}(\alpha)=1+c_{i,j}(\alpha)$ and $c_{i,j+1}(\alpha')=c_{i,j}(\alpha')$;
\item[(iii)] $c_{i,k}(\alpha) > c_{i,k}(\alpha')$, for every $k>j$;
\item[(iv)] Either $\alpha_{k} +c_{i,k}(\alpha) \geqslant \alpha_{i}$ or $\alpha_{k} + c_{i,k}(\alpha) < \alpha_{i}'$, for every $k\in [i+1,j-1]$.
\end{enumerate}
\end{prop}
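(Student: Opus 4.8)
The plan is to rephrase the recursion defining $c_{i,k}$ through the residuals $r_k := \alpha_i - c_{i,k}(\alpha)$ and $r_k' := \alpha_i' - c_{i,k}(\alpha')$. Setting $d := \alpha_i - \alpha_i' \geqslant 1$ (by \eqref{eq:a1}), the recursion reads $r_k = r_{k-1} - [\alpha_{k-1} < r_{k-1}]$ (with the analogous formula for $r_k'$ using $\alpha_{k-1}'$), where $[\,\cdot\,]$ denotes the Iverson bracket. The advantage of this form is that all four claims become statements about the single difference $\Delta_k := r_k - r_k'$, since $c_{i,k}(\alpha) - c_{i,k}(\alpha') = d - \Delta_k$. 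The first step is an elementary monotonicity observation: whenever the relevant entry is shared, $\alpha_{k-1} = \alpha_{k-1}'$, one has $0 \leqslant \Delta_k \leqslant \Delta_{k-1}$. Indeed, once $r_{k-1} \geqslant r_{k-1}'$ the difference of brackets lies in $\{0,1\}$, it equals $1$ only in the intermediate case $r_{k-1}' \leqslant \alpha_{k-1} < r_{k-1}$, and that case forces $\Delta_{k-1} \geqslant 1$, so $\Delta_k = \Delta_{k-1} - 1 \geqslant 0$ stays non-negative. Hence $\Delta_k$ is weakly decreasing and remains non-negative across any run of shared entries.

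For assertion (i), on the range $k \in [i+1, j]$ every entry $\alpha_{k-1}$ feeding the recursion has index in $[i, j-1]$ and is therefore shared by \eqref{eq:a3}, with $\Delta_{i+1} = d$ at the base point; by the monotonicity observation $\Delta_k$ is weakly decreasing there, while \eqref{eq:a4} pins $\Delta_j = d$ as well (substitute $c_{i,j}(\alpha) = c_{i,j}(\alpha') = \alpha_i' - \alpha_j$). A weakly decreasing integer sequence with equal endpoints is constant, so $\Delta_k \equiv d$ on $[i+1, j]$, which is exactly $c_{i,k}(\alpha) = c_{i,k}(\alpha')$. Assertion (iv) then follows at once: constancy of $\Delta_k$ forces each bracket difference $[\alpha_k < r_k] - [\alpha_k < r_k']$ to vanish for $k \in [i+1, j-1]$, and, since $r_k > r_k'$, this vanishing is precisely the dichotomy $\alpha_k \geqslant r_k$ or $\alpha_k < r_k'$, that is, $\alpha_k + c_{i,k}(\alpha) \geqslant \alpha_i$ or $\alpha_k + c_{i,k}(\alpha) < \alpha_i'$.

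Assertion (ii) is a single-step computation at the distinguished index $j$: \eqref{eq:a4} gives $r_j = d + \alpha_j > \alpha_j$ and $r_j' = \alpha_j$, whereas \eqref{eq:a2} gives $\alpha_j' = \alpha_j + d - 1 \geqslant r_j'$, and substituting these into the recursion produces the $+1$ step for $\alpha$ and the flat step for $\alpha'$. For assertion (iii), claim (ii) yields $\Delta_{j+1} = d - 1$, and for every $k > j+1$ the entry $\alpha_{k-1}$ (of index $> j$) is again shared, so the monotonicity observation applies on $[j+1, \infty)$ and gives $\Delta_k \leqslant d - 1$ for all $k > j$; therefore $c_{i,k}(\alpha) - c_{i,k}(\alpha') = d - \Delta_k \geqslant 1 > 0$. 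I expect the only delicate point to be this interplay between monotonicity and the two-sided boundary data in (i): the forward recursion by itself cannot exclude the intermediate case $r_{k-1}' \leqslant \alpha_{k-1} < r_{k-1}$, and it is exactly condition \eqref{eq:a4}, by forcing the two endpoints to agree, that rules it out simultaneously across the entire range $[i+1, j]$.
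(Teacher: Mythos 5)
Your proposal is correct and takes essentially the same approach as the paper: your monotonicity observation for $\Delta_k$ is precisely the paper's inductive sandwich inequality $0 \leqslant c_{i,k}(\alpha)-c_{i,k}(\alpha') \leqslant c_{i,k+1}(\alpha)-c_{i,k+1}(\alpha') \leqslant \alpha_{i}-\alpha_{i}'$ rewritten in terms of the residuals $r_k = \alpha_i - c_{i,k}(\alpha)$. Both arguments then use \eqref{eq:a4} to pin the difference at $k=j$ and deduce (i), prove (ii) by a direct computation from \eqref{eq:a1}, \eqref{eq:a2}, \eqref{eq:a4}, and obtain (iii) and (iv) as consequences of the monotonicity together with (i) and (ii).
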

\begin{proof}
(i): Fix $i$. Let us show inductively that for any $k\geqslant i+1$
\begin{equation}\label{eq:lemaACover}
0\leqslant c_{i,k}(\alpha) - c_{i,k}(\alpha')\leqslant c_{i,k+1}(\alpha) - c_{i,k+1}(\alpha') \leqslant\alpha_{i}-\alpha_{i}'.
\end{equation}

If $k=i+1$ then, by definition, $0 = c_{i,i+1}(\alpha) - c_{i,i+1}(\alpha') \leqslant c_{i,i+2}(\alpha) - c_{i,i+2}(\alpha') \leqslant 1 \leqslant \alpha_{i}-\alpha_{i}'$, where the first inequality follows from the fact that if $c_{i,i+2}(\alpha')=1$ then $c_{i,i+2}(\alpha)=1$.

Assume that $k>i+1$ and the claim is true for $k$. Suppose that $c_{i,k+2}(\alpha) = c_{i,k+1}(\alpha)$. Notice that \eqref{eq:a1}, \eqref{eq:a2}, and \eqref{eq:a3} imply that $\alpha_{k+1}' \geqslant \alpha_{k+1}$. It follows by induction that $\alpha_{k+1}' \geqslant \alpha_{k+1} \geqslant \alpha_{i}- c_{i,k+1}(\alpha) \geqslant \alpha_{i}'- c_{i,k+1}(\alpha')$. Thus, $c_{i,k+2}(\alpha') = c_{i,k+1}(\alpha')$ and $c_{i,k+1}(\alpha) - c_{i,k+1}(\alpha')= c_{i,k+2}(\alpha) - c_{i,k+2}(\alpha')$.

Now, suppose that $c_{i,k+2}(\alpha) = c_{i,k+1}(\alpha)+1$. Since either $c_{i,k+2}(\alpha') = c_{i,k+1}(\alpha')$ or $c_{i,k+2}(\alpha') = c_{i,k+1}(\alpha')+1$, we have $c_{i,k+1}(\alpha) - c_{i,k+1}(\alpha')\leqslant c_{i,k+2}(\alpha) - c_{i,k+2}(\alpha')$.

For the second inequality, notice that the difference increases at most one. Thus, if $c_{i,k+1}(\alpha) - c_{i,k+1}(\alpha')<\alpha_{i}-\alpha_{i}'$ then $c_{i,k+2}(\alpha) - c_{i,k+2}(\alpha')\leqslant\alpha_{i}-\alpha_{i}'$. It remains to prove that if $c_{i,k+1}(\alpha) - c_{i,k+1}(\alpha')=\alpha_{i}-\alpha_{i}'$ then $c_{i,k+2}(\alpha) - c_{i,k+2}(\alpha')=\alpha_{i}-\alpha_{i}'$. Since $c_{i,j}(\alpha) - c_{i,j}(\alpha') = 0$, it follows that $k+1 \neq j$. Thus, $\alpha_{k+1} < \alpha_{i}- c_{i,k+1}(\alpha)$ if and only if $\alpha_{k+1}' < \alpha_{i}'- c_{i,k+1}(\alpha')$. Hence, $c_{i,k+2}(\alpha) - c_{i,k+2}(\alpha')=\alpha_{i}-\alpha_{i}'$.

Finally, since $c_{i,j}(\alpha) - c_{i,j}(\alpha') = 0$, by Equation \eqref{eq:lemaACover}, we have $c_{i,k}(\alpha) - c_{i,k}(\alpha') = 0$ for every $k\in [i+1,j-1]$.

(ii): It follows by conditions \eqref{eq:a1} and \eqref{eq:a4} that $\alpha_{i}-c_{i,j}(\alpha) = \alpha_{i}-\alpha_{i}'+\alpha_{j}>\alpha_{j}$. Thus, $c_{i,j+1}(\alpha) = c_{i,j}(\alpha)+1$. On the other hand, by conditions \eqref{eq:a1}, \eqref{eq:a2} and \eqref{eq:a4}, $\alpha_{i}'-c_{i,j}(\alpha')=\alpha_{j} \leqslant \alpha'_{j}$, by which $c_{i,j+1}(\alpha') = c_{i,j}(\alpha')$.

(iii): By (ii) and Equation \eqref{eq:lemaACover}, we have $c_{i,m}(\alpha)-c_{i,m}(\alpha') \geqslant 1$ for every $m>j$.

(iv): Suppose there exists $k \in [i+1,j-1]$ such that either $\alpha_k+c_{i,k}(\alpha)<\alpha_i$ and $\alpha_k+c_{i,k}(\alpha)\geq \alpha'_{i}$. It follows by definition that $c_{i,k+1}(\alpha)=c_{i,k}(\alpha)+1$ and $c_{i,k+1}(\alpha')=c_{i,k}(\alpha')$ which contradicts (i).
\end{proof}

For instance, suppose that $\alpha = (4,5,4,1,0,2,0)$ and $\alpha'= (2,5,4,1,0,2,1)$ as before such that $\alpha$ covers $\alpha'$ in position $(1,7)$. The Figure \ref{fig:comparingcij} illustrates the paths of $c_{1,j}(\alpha)$ and $c_{1,j}(\alpha')$ which are parallel to each other until they reach the $7$-th row, reflecting the equations of Proposition \ref{prop:moves_property}(i),(ii). Besides, the ``distance'' between the paralell paths is related to the condition \eqref{eq:a4}.

\begin{figure}[ht]
\centering
\includegraphics[scale=0.6]{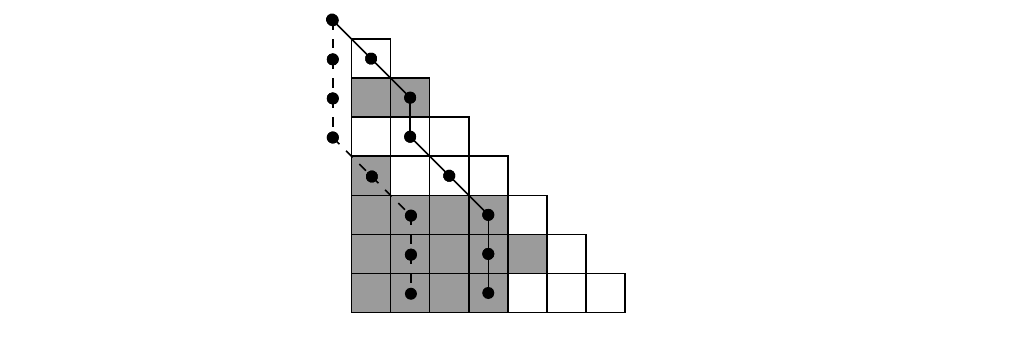}
\caption{The paths for $c_{1,j}(\alpha)$ and $c_{1,j}(\alpha')$.}
\label{fig:comparingcij}
\end{figure}

\section{The equivalence with the Bruhat order}\label{sec:equivalence}

This section establishes our main result relating the partial order in the poset \texorpdfstring{$\mathcal{PC}_{n}$}{PCn} of compositions with the strong Bruhat order in $S_{n}$. Recall that the definition of $c(\alpha)$ depends only on the composition $\alpha$. However, given a permutation $w\in S_{n}$ we are able to compute $c(\alpha)$ in a different way when $\alpha$ is the code of $w$.

\subsection{The poset isomorphism}

We begin establishing the meaning of the $c_{i,j}$'s in terms of the permutation model. 

\begin{prop}\label{prop:valueD}
Let $w\in S_{n}$ and denote by $\alpha = \code(w)$. Then, for $1\leqslant i<j\leqslant n$,
\begin{equation}
c_{i,j}(\alpha) = \#\{k \colon i<k<j \mbox{ and } w(k)<w(i) \}.\label{eq:cw}
\end{equation}
Moreover, $c_{i,n}(\alpha) = \alpha_{i}$ for $i\in[n]$.
\end{prop}
\begin{proof}
Denote by $d(A_{j}) = \#\{k \colon i<k<j \mbox{ and } w(k)<w(i) \}$. We will prove that $d(A_{j})=c_{i,j}(\alpha)$ by induction on $j$. If $j = i+1$ then $d(A_{i+1})= 0 = c_{i,i+1}(\alpha)$. Assume that $d(A_{j})= c_{i,j}(\alpha)$.

Consider the regions in the permutation matrix as in Figure \ref{fig:matrix_regions1}. Denote by $d(X)$ the number of dots in the respective region $X$.

\begin{figure}[ht]
\centering
\includegraphics[scale=1]{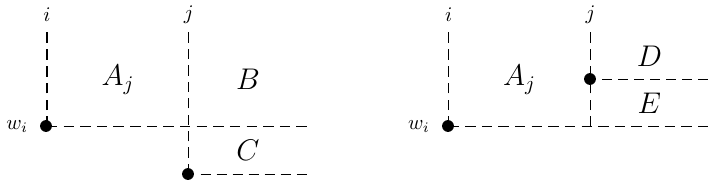}
\caption{Regions in the permutation matrix of $w$. The dots in columns $i$ and $j$ does not belong to any region. On the left $w_{j}>w_{i}$. On the right $w_{j}<w_{i}$.}
\label{fig:matrix_regions1}
\end{figure}

If $w_{j}>w_{i}$ then $\alpha_{i} = d(A_{j}) + d(B)$ and $\alpha_{j} = d(B)+d(C)$. In this case, $\alpha_{j} \geqslant \alpha_{i}-d(A_{j})=\alpha_{i}-c_{i,j}(\alpha)$ and, by definition and induction, $c_{i,j+1}(\alpha) = c_{i,j}(\alpha)=d(A_{j})=d(A_{j+1})$.

If $w_{j}<w_{i}$ then $\alpha_{i} = d(A_{j})+1 + d(D) + d(E)$ and $\alpha_{j}=d(D)$. In this case, $\alpha_{j} < \alpha_{i} - d(A_{j}) = \alpha_{i}-c_{i,j}(\alpha)$ and, by definition and induction, $c_{i,j+1}(\alpha) = c_{i,j}(\alpha)+1 = d(A_{j})+1=d(A_{j+1})$. 
\end{proof}

\begin{rem} Denoncourt in \cite{Den13} called this matrix Extended Lehmer code and denoted it by $c(w)$. His definition coincides with Equation \eqref{eq:cw}.
\end{rem}

\begin{prop}\label{prop:moves_property2}
Let $w,w'\in S_{n}$ and denote by $\alpha = \code(w)$ and $\alpha' = \code(w')$.
Suppose that $\alpha$ covers $\alpha'$ in position $(i,j)$ such that $\alpha_{i}>\alpha_{i}'+1$. Then, for every $k\in [i+1,j-1]$ and $m\in [\alpha_{i}'+1,\alpha_{i}-1]$,
\begin{align}
s_{m+i} \cdot \rr{w}_{i}' &= \rr{w}_{i}' \cdot s_{m+i}\label{eq:lemaproperties1};\\
s_{m+k-1 - c_{i,k}(\alpha)} \cdot \rr{w}_{k} &= \rr{w}_{k} \cdot s_{m+k - c_{i,k+1}(\alpha)}.\label{eq:lemaproperties2}
\end{align}
\end{prop}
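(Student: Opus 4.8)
The statement is a pair of commutation identities involving simple reflections and the row-reading factors $\rr{w}_i'$, $\rr{w}_k$. My first move would be to unpack both sides using the explicit form of the row readings. Recall $\rr{w}_k = s_{\alpha_k+k-1}\,s_{\alpha_k+k-2}\cdots s_{k+1}\,s_k$ is a descending run of consecutive simple reflections occupying exactly the window of indices $[k,\alpha_k+k-1]$. A conjugation of the form $w_k^{-1}\,s_p\,\rr{w}_k$ collapses to a single commutation whenever the index $p$ lies far enough from this window, and to a braid-type shift $s_p \mapsto s_{p\pm 1}$ when $p$ sits at one edge of the run. So the whole proposition is really a bookkeeping claim: as $m$ ranges over $[\alpha_i'+1,\alpha_i-1]$, the index $m+i$ is carried across each factor $\rr{w}_k$ and shifted down by exactly the amount the polygonal line $c_{i,\bullet}(\alpha)$ records, landing at $m+k-1-c_{i,k}(\alpha)$ before $\rr{w}_k$ and $m+k-c_{i,k+1}(\alpha)$ after. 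I would therefore prove \eqref{eq:lemaproperties2} as the real content and treat \eqref{eq:lemaproperties1} as the base/degenerate case (where the relevant run is $\rr{w}_i'$, obtained by deleting the top of $\rr{w}_i$).

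**The arithmetic of the index.** For \eqref{eq:lemaproperties2} the key is to control the position of $p := m+k-1-c_{i,k}(\alpha)$ relative to the window $[k,\alpha_k+k-1]$ of $\rr{w}_k$. I expect two regimes, matching the two cases in the recursive definition of $c_{i,k}$. When $c_{i,k+1}(\alpha)=c_{i,k}(\alpha)$ (the line moves straight up, i.e. $\alpha_k \geqslant \alpha_i - c_{i,k}(\alpha)$), the target index is unchanged, $p = m+k-c_{i,k+1}(\alpha)$, so I must show $s_p$ \emph{commutes} past the entire run $\rr{w}_k$, which forces $p$ to avoid $[k-1,\alpha_k+k]$ or to repeatedly braid through in a way that returns the same index; here I would invoke Proposition~\ref{prop:moves_property}(iv), which says $\alpha_k+c_{i,k}(\alpha)$ is never strictly between $\alpha_i'$ and $\alpha_i$, to guarantee $p$ falls outside the dangerous band for every admissible $m$. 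When $c_{i,k+1}(\alpha)=c_{i,k}(\alpha)+1$ (the line steps diagonally, $\alpha_k < \alpha_i - c_{i,k}(\alpha)$), the target drops by one, $p - 1 = m+k-c_{i,k+1}(\alpha)$, and I must show $s_p$ passes through $\rr{w}_k$ and emerges as $s_{p-1}$; this is exactly the effect of a single braid relation at the bottom of the descending run, namely $s_p \cdot (s_p s_{p-1}\cdots) = (s_p s_{p-1}\cdots) s_{p-1}$ reorganized, with pure commutations handling the rest of the factors.

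**Carrying out the induction.** The cleanest structure is an induction on $k$ from $i+1$ to $j-1$, transporting the index $m+i$ rightward one factor at a time. I would set up the inductive hypothesis that before encountering $\rr{w}_k$ the travelling reflection is $s_{m+k-1-c_{i,k}(\alpha)}$, verify the single-factor identity \eqref{eq:lemaproperties2} by the case analysis above, and conclude that after $\rr{w}_k$ it becomes $s_{m+k-c_{i,k+1}(\alpha)}$, which is precisely the hypothesis at $k+1$ (since $c_{i,(k+1)}$ advances the subscript). The base step feeds from \eqref{eq:lemaproperties1}: deleting the top reflection of $\rr{w}_i$ produces $\rr{w}_i'$, whose window is $[i,\alpha_i'+i-1]$, and the constraint $m \in [\alpha_i'+1,\alpha_i-1]$ places $m+i$ strictly \emph{above} this window, so $s_{m+i}$ commutes through $\rr{w}_i'$ untouched — giving \eqref{eq:lemaproperties1} directly from the commutation relation $s_as_b=s_bs_a$ for $|a-b|\geqslant 2$, after checking the edge index $m+i \neq \alpha_i'+i$.

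**Anticipated obstacle.** The genuine difficulty is the boundary/edge case: when the travelling index $p$ lands exactly at $k$ or at $\alpha_k+k-1$, the commutation fails and a braid must intervene, and one must certify that the \emph{resulting} index still matches the formula rather than triggering a further unwanted shift. This is where condition \eqref{eq:a4} and Proposition~\ref{prop:moves_property}(iv) are indispensable: they pin down precisely which edge (top versus bottom of the run) the index meets, and hence whether the subscript stays put or drops by one, ensuring the two branches of the $c_{i,k}$ recursion line up with the two branches of the Coxeter move. I would devote the bulk of the write-up to verifying, for each $m$ in the stated range and each admissible $k$, that $m+k-1-c_{i,k}(\alpha)$ never collides with the window edges in a way unaccounted for by the case split — the rest reduces to routine application of the commutation and braid relations.
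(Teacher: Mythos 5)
Your skeleton is the same as the paper's: prove \eqref{eq:lemaproperties1} by pure commutation (correct — $m+i\geqslant \alpha_{i}'+i+1$ clears the window $[i,\alpha_{i}'+i-1]$ of $\rr{w}_{i}'$ by at least two), then transport the reflection across one factor $\rr{w}_{k}$ at a time, splitting according to the two branches of the recursion for $c_{i,k}$ and invoking Proposition \ref{prop:moves_property}(iv). However, your pairing of the two branches with the two types of Coxeter moves is exactly backwards, and your claimed braid shift goes in the wrong direction; since the proposition \emph{is} this bookkeeping, both branches of your plan fail as written. Write $p=m+k-1-c_{i,k}(\alpha)$. If $c_{i,k+1}(\alpha)=c_{i,k}(\alpha)$ then the target index is $m+k-c_{i,k+1}(\alpha)=p+1$, not $p$; if $c_{i,k+1}(\alpha)=c_{i,k}(\alpha)+1$ then the target is $m+k-c_{i,k+1}(\alpha)=p$, not $p-1$. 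So the ``straight-up'' branch is the one that needs a braid (with the index \emph{raised} by one), and the ``diagonal'' branch is the one handled by pure commutation (index unchanged).

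Here is why your two cases break down. In the branch $c_{i,k+1}(\alpha)=c_{i,k}(\alpha)$, Proposition \ref{prop:moves_property}(iv) forces $\alpha_{k}+c_{i,k}(\alpha)\geqslant\alpha_{i}$, and together with $m\leqslant\alpha_{i}-1$ and $c_{i,k}(\alpha)\leqslant c_{i,j}(\alpha)=\alpha_{i}'-\alpha_{j}\leqslant\alpha_{i}'$ this gives $k\leqslant p\leqslant \alpha_{k}+k-2$: the index sits strictly \emph{inside} the window of $\rr{w}_{k}$, so it cannot ``avoid the dangerous band'' and commute through as you propose; the correct move is commutations down to $s_{p+2}$, one braid $s_{p}s_{p+1}s_{p}=s_{p+1}s_{p}s_{p+1}$, then commutations, yielding $s_{p}\cdot\rr{w}_{k}=\rr{w}_{k}\cdot s_{p+1}$. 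In the branch $c_{i,k+1}(\alpha)=c_{i,k}(\alpha)+1$, Proposition \ref{prop:moves_property}(iv) gives $\alpha_{k}+c_{i,k}(\alpha)<\alpha_{i}'$, hence $p\geqslant\alpha_{i}'+k-c_{i,k}(\alpha)>\alpha_{k}+k$, so $p$ clears the top of the window by at least two and $s_{p}$ commutes through with no shift at all — no braid occurs here. Finally, a descending run can never lower an index: for $R=s_{q}s_{q-1}\cdots s_{k}$ and $k\leqslant p\leqslant q-1$ one has $R^{-1}s_{p}R=s_{p+1}$, and the identity you invoke, $s_{p}\cdot(s_{p}s_{p-1}\cdots)=(s_{p}s_{p-1}\cdots)s_{p-1}$, is false (its left-hand side cancels $s_{p}^{2}$). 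With the two cases swapped and the braid direction corrected, your induction does go through and coincides with the paper's proof.
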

\begin{proof}
Since $m>\alpha_{i}'$, we have that $s_{m+i}$ commutes with each simple reflection in $\rr{w}_{i}' = s_{\alpha_{i}'+i-1}\cdots s_{i}$. This proves Equation \eqref{eq:lemaproperties1}.

Let us prove Equation \eqref{eq:lemaproperties2}. By item (iv) of Proposition \ref{prop:moves_property}, consider the following two cases:
\begin{itemize}
\item Suppose that $\alpha_{k} + c_{i,k}(\alpha) < \alpha_{i}'$. Then $ c_{i,k+1}(\alpha) =  c_{i,k}(\alpha)+1$ and $m+k-1 - c_{i,k}(\alpha)\geqslant \alpha_{i}'+k - c_{i,k}(\alpha)>\alpha_{k}+k$. The simple reflection $s_{m+k-1 - c_{i,k}(\alpha)}$ commutes with each simple reflection in $\rr{w}_{k}$, i.e., $s_{m+k-1 - c_{i,k}(\alpha)} \cdot \rr{w}_{k} = \rr{w}_{k} \cdot s_{m+k-1 - c_{i,k}(\alpha)}=\rr{w}_{k} \cdot s_{m+k - c_{i,k+1}(\alpha)}$.	

\item Suppose that $\alpha_{k} +c_{i,k}(\alpha) \geqslant \alpha_{i}$. Then $ c_{i,k+1}(\alpha)=  c_{i,k}(\alpha)$ and $m+k-1 - c_{i,k}(\alpha) \leqslant \alpha_{i}+k - 2 - c_{i,k}(\alpha)\leqslant \alpha_{k}+k-2$. Also notice that $c_{i,k}(\alpha) \leqslant c_{i,j}(\alpha) = \alpha_{i}' - \alpha_{j}\leqslant \alpha_{i}'$ and $m+k-1 - c_{i,k}(\alpha) \geqslant \alpha_{i}'+k-c_{i,k}(\alpha) \geqslant k$. Thus, $k\leqslant m+k-1 - c_{i,k}(\alpha)\leqslant \alpha_{k}+k-2$.
\end{itemize}

We get the following sequence of moves: $s_{m+k-1 - c_{i,k}(\alpha)}$ commutes with each simple reflection in $s_{\alpha_{k}+k-1}\cdots s_{m+k-1 - c_{i,k}(\alpha)+2}$; then we apply a braid move to get 
\begin{equation*}
s_{m+k-1 - c_{i,k}(\alpha)} \cdot s_{m+k - c_{i,k}(\alpha)} \cdot s_{m+k-1 - c_{i,k}(\alpha)}=s_{m+k- c_{i,k}(\alpha)} \cdot s_{m+k-1 - c_{i,k}(\alpha)} \cdot s_{m+k - c_{i,k}(\alpha)}
\end{equation*}
and we continue to commute $s_{m+k - c_{i,k}(\alpha)}$ with $s_{m+k-2 - c_{i,k}(\alpha)} \cdots s_{k}$. Hence, we have that $s_{m+k -1- c_{i,k}(\alpha)} \cdot \rr{w}_{k} = \rr{w}_{k} \cdot s_{m+k - c_{i,k}(\alpha)}= \rr{w}_{k} \cdot s_{m+k - c_{i,k+1}(\alpha)}$.
\end{proof}

We are now ready to prove our main theorem which states that covering relations for permutations in the strong Bruhat order are equivalent to covers for compositions in $\mathcal{PC}_n$.

\begin{thm}\label{thm:codecovering2}
Let $w,w'\in S_{n}$ and denote by $\alpha = \code(w)$ and $\alpha' = \code(w')$. Then, $w$ covers $w'$ with $w'=w\cdot (i,j)$ if, and only if, $\alpha$ covers $\alpha'$ in position $(i,j)$.
\end{thm}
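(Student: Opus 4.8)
The plan is to prove the two implications separately, using the dictionary furnished by Lemma \ref{lem:valueD} for the forward direction and the reduced-word manipulations of Proposition \ref{prop:moves_property2} for the converse.

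For the forward implication, suppose $w$ covers $w'$ with $w'=w\cdot(i,j)$, $i<j$. By Lemma \ref{lem:bjorner} this is equivalent to $w(i)>w(j)$ together with the absence of any $k$, $i<k<j$, satisfying $w(j)<w(k)<w(i)$. First I would record the weight identity $|\alpha|=\ell(w)=\ell(w')+1=|\alpha'|+1$. Then I would partition the positions $l>i$ into the ranges $i<l<j$, $l=j$, and $l>j$, and classify each value $w(l)$ as lying below $w(j)$, strictly between $w(j)$ and $w(i)$, or above $w(i)$; the no-intermediate hypothesis forbids the middle class for $i<l<j$. Counting dots exactly as in the proof of Lemma \ref{lem:valueD} then yields \eqref{eq:a1}, \eqref{eq:a2}, and \eqref{eq:a3} directly, with $\alpha_i-\alpha_i'-1$ equal to the number $S$ of values in $(w(j),w(i))$ sitting to the right of position $j$. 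Condition \eqref{eq:a4} is the cleanest part: by Lemma \ref{lem:valueD}, $c_{i,j}(\alpha)$ counts the positions $i<k<j$ with value below $w(i)$, while $c_{i,j}(\alpha')$ counts those with value below $w'(i)=w(j)$, and these coincide precisely because of the no-intermediate condition, their common value being $\alpha_i'-\alpha_j$.

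For the converse, suppose $\alpha$ covers $\alpha'$ in positions $(i,j)$ and set $z=\alpha_i-\alpha_i'\geqslant 1$. If $z=1$ then $\alpha'$ differs from $\alpha$ only in position $i$ and Lemma \ref{lem:covering} already gives that $w$ covers $w'$. For $z\geqslant 2$ I would compare the row-reading word $\rr{w}=\rr{w}_1\cdots\rr{w}_{n-1}$ with $\rr{w'}$: passing from $\alpha$ to $\alpha'$ strips the top $z$ reflections $s_{\alpha_i+i-1}\cdots s_{\alpha_i'+i}$ off row $i$ and appends $z-1$ reflections to the top of row $j$. The heart of the argument is to migrate, one at a time, the $z-1$ reflections indexed by $m\in[\alpha_i'+1,\alpha_i-1]$ from row $i$ across rows $i+1,\dots,j-1$ into row $j$: relation \eqref{eq:lemaproperties1} moves such a reflection past the truncated row $\rr{w}_i'$, and relation \eqref{eq:lemaproperties2} carries it through each intermediate row, the index telescoping through the values $c_{i,k}(\alpha)$ so that, by \eqref{eq:a4}, it lands exactly on the new top of row $j$. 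The single remaining reflection ($m=\alpha_i$) is the one deleted, so that $\rr{w'}=\rrt{w}{\I}$ for the appropriate $\I$; since $\rr{w'}$ is reduced of length $|\alpha'|=|\alpha|-1$, this exhibits a reduced word of $w'$ as a subword of $\rr{w}$, whence $w'\leqslant w$ and, by the length drop, $w$ covers $w'$.

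It remains to identify the transposition as exactly $(i,j)$. Rather than compute the conjugated reflection $s_{j_\ell}\cdots s_{j_{\I+1}}s_{j_\I}s_{j_{\I+1}}\cdots s_{j_\ell}$ by hand, I would bootstrap off the forward implication: having shown $w$ covers $w'$, write $w'=w\cdot(a,b)$ for the unique pair $a<b$ where the one-line notations differ, and apply the forward direction to conclude that $\alpha$ covers $\alpha'$ in positions $(a,b)$. The uniqueness of the $(i,z)$-removing (Lemma \ref{lem:remove_uniquely}) together with the fact that $\alpha$ and $\alpha'$ differ in exactly the positions dictated by \eqref{eq:a1}--\eqref{eq:a3} forces $(a,b)=(i,j)$, completing the equivalence. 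The main obstacle I anticipate is the bookkeeping in the migration step: tracking the reflection indices through the telescoping sums $c_{i,k}(\alpha)$ while verifying at each row that the relevant Coxeter relation applies (which is exactly the dichotomy recorded in Proposition \ref{prop:moves_property}(iv)), and checking that the process lands cleanly in row $j$ rather than overshooting, which is where condition \eqref{eq:a4} is essential.
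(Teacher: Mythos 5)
Your proposal is correct and follows essentially the same route as the paper: the forward implication by counting dots in the permutation matrix via Lemma \ref{lem:bjorner} and Lemma \ref{lem:valueD}, and the converse by handling $z=1$ with Lemma \ref{lem:covering}, migrating the reflections $s_{m+i}$, $m\in[\alpha_i'+1,\alpha_i-1]$, across rows via Proposition \ref{prop:moves_property2} so that condition \eqref{eq:a4} lands them on row $j$, and then identifying the transposition as $(i,j)$ by bootstrapping the forward direction against uniqueness of covering positions (the paper compresses this last step into a terse appeal to the bijection, so your explicit version is if anything cleaner). The only flaw is a notational slip: the deleted reflection is the bottom of the stripped block, $s_{\alpha_i'+i}$, i.e.\ $m=\alpha_i'$ rather than $m=\alpha_i$, which is harmless to the argument.
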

\begin{proof}

Suppose that $w$ covers $w'$ such that $w = w' \cdot (i,j)$, i.e., we can write $w = w_{1}\, \cdots \, w_{i}\, \cdots\, w_{j}\, \cdots \, w_{n}$ and $w' = w_{1}\, \cdots \, w_{j}\, \cdots\, w_{i}\, \cdots \, w_{n}$ with $w_i=w'_j>w_j=w'_i$. Recall that the code of a permutation counts the number of inversions to the right of the position.

Consider the regions in the permutation matrix as in Figure \ref{fig:matrix_regions2}. Denote by $d(X)$ the number of dots in the respective region $X$.

\begin{figure}[ht]
\centering
\includegraphics[scale=1]{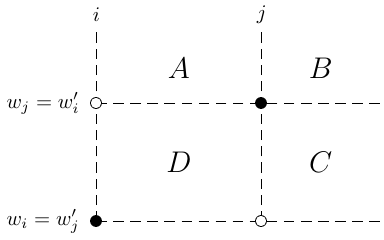}
\caption{Regions in the permutation matrix of $w$ (black dots) and $w'$ (white dots). The dots in columns $i$ and $j$ do not belong to any region.}
\label{fig:matrix_regions2}
\end{figure}

It follows from Lemma \ref{lem:bjorner} that $d(D) = 0$. Then, $\alpha_{i} = d(A) +d(B)+d(C) +1$ and $\alpha_{i}' = d(A)+d(B)$. Hence, $d(C) = \alpha_{i}-\alpha_{i}'-1 \geqslant 0$ and we have that $\alpha_{i} \geqslant \alpha_{i}' + 1$, which proves condition \eqref{eq:a1}. Observe that $\alpha_{j}'= d(B)+d(C) = \alpha_{j} + \alpha_{i}-\alpha_{i}'-1$, which proves condition \eqref{eq:a2}. 

Clearly, $\alpha_{k} = \alpha_{k}'$ for $k<i$ or $j<k$. If $i<k<j$ then it follows from Lemma \ref{lem:bjorner} that $\alpha_{k}=\alpha_{k}'$, which proves condition \eqref{eq:a3}.

Notice that $d(A) = \alpha'_{i}-\alpha_{j}$. By Proposition \ref{prop:valueD}, $c_{i,j}(\alpha)= d(A) + d(D) = d(A) = c_{i,j}(\alpha')$, which gives condition \eqref{eq:a4}.

Now, assume that $\alpha$ covers $\alpha'$ in positions $(i,j)$, where $i<j$.
We will show that for some $I\in [\ell(w)]$ there is a sequence of moves from $\rrt{w}{I}$ to $\rr{w'}$, i.e., $\rrt{w}{I}$ is a reduced decomposition of $w'$.

If $\alpha_{i} = \alpha_{i}'+1$ then $w$ covers $w'$ by Lemma \ref{lem:covering}.

Now, suppose that $\alpha_{i} > \alpha_{i}'+1$. As a consequence of conditions \eqref{eq:a1}, \eqref{eq:a2}, and \eqref{eq:a3} we have that
\begin{align*}
\rr{w}_{i}&=s_{\alpha_{i}+i-1} s_{\alpha_{i}+i-2} \cdots s_{\alpha_{i}'+i} \cdot \rr{w}_{i}'\\
\rr{w}_{j}'&= s_{\alpha_{j}'+j-1} s_{\alpha_{j}'+j-2} \cdots s_{\alpha_{j}+j} \cdot \rr{w}_{j}\\
\rr{w}_{k}&=\rr{w}_{k}', \mbox{ for $k\neq i$ and $k\neq j$.}
\end{align*}

Moreover, condition \eqref{eq:a2} also says that 
\begin{equation*}
\ell(s_{\alpha_{j}'+j-1} s_{\alpha_{j}'+j-2} \cdots s_{\alpha_{j}+j}) = \ell(s_{\alpha_{i}+i-1} s_{\alpha_{i}+i-2} \cdots s_{\alpha_{i}'+i+1}).
\end{equation*}

Choose $I =\left(\sum_{k=1}^{i} \alpha_{k}\right) - \alpha_{i}'$. We have that $\rrt{w}{I}$ and $\rr{w}'$ are written as follows:
\begin{align*}
\rrt{w}{I} &= \rr{w}_{1} \cdots \rr{w}_{i-1} \cdot ( s_{\alpha_{i}+i-1} s_{\alpha_{i}+i-2} \cdots s_{\alpha_{i}'+i+1} )\cdot \rr{w}_{i}' \cdot \rr{w}_{i+1}  \cdots \rr{w}_{n-1},\\
\rr{w'} &=\rr{w}_{1}\cdots \rr{w}_{i-1} \cdot \rr{w}_{i}'  \cdots \rr{w}_{j-1}' \cdot ( s_{\alpha_{j}'+j-1} s_{\alpha_{j}'+j-2} \cdots s_{\alpha_{j}+j} ) \cdot \rr{w}_{j} \cdots \rr{w}_{n-1}.
\end{align*}

Now, let us describe a sequence of moves that transforms $\rrt{w}{I}$ into $\rr{w'}$. For each $m\in [\alpha_{i}'+1,\alpha_{i}-1]$, we can apply Lemma \ref{prop:moves_property2} to describe the sequence of moves as follows:
\begin{align*}
s_{m+i} \cdot \rr{w}_{i}' \cdots \rr{w}_{j-1}' &= \rr{w}_{i}' \cdot s_{m+i} \cdot \rr{w}_{i+1}' \cdot \rr{w}_{i+2}' \cdots \rr{w}_{j-1}' \nonumber \\
&= \rr{w}_{i}'  \cdot \rr{w}_{i+1}' \cdot s_{m+i+1-c_{i,i+2}(\alpha)} \cdot \rr{w}_{i+2}' \cdots \rr{w}_{j-1}' \nonumber \\
&= \rr{w}_{i}'  \cdot \rr{w}_{i+1}' \cdot \rr{w}_{i+2}' \cdot s_{m+i+2-c_{i,i+3}(\alpha)}  \cdots \rr{w}_{j-1}' \label{eq:commutestep}\\
&\qquad  \vdots \nonumber\\
& = \rr{w}_{i}'  \cdots \rr{w}_{j-1}' \cdot s_{m+j-1- c_{i,j}(\alpha)} = \rr{w}_{i}'  \cdots \rr{w}_{j-1}' \cdot s_{m+j-1+\alpha_{j}-\alpha_{i}'}\nonumber
\end{align*}
where the last equality is due to condition \eqref{eq:a4}. Hence, using condition \eqref{eq:a2}, we have
\begin{align*}
\rrt{w}{I} &= \rr{w}_{1} \cdots \rr{w}_{i-1} \cdot (s_{\alpha_{i}+i-1} s_{\alpha_{i}+i-2} \cdots s_{\alpha_{i}'+i+1} )\cdot \rr{w}_{i}' \cdots \rr{w}_{j-1}' \cdot \rr{w}_{j}  \cdots \rr{w}_{n-1} \\
& = \rr{w}_{1} \cdots \rr{w}_{i-1} \cdot\rr{w}_{i}' \cdots \rr{w}_{j-1}' \cdot ( s_{\alpha_{j}'+j-1} s_{\alpha_{j}'+j-2} \cdots s_{\alpha_{j}+j} ) \cdot \rr{w}_{j}  \cdots \rr{w}_{n-1}= \rr{w'}.
\end{align*}

Hence, the reduced decompositions of $\rrt{w}{I}$ and $\rr{w'}$ produce the same permutation $w'$ (in one-line notation). By the bijection between $S_{n}$ and $\mathcal{C}_{n}$ (see Lemma \ref{lem:perm_by_diagram} and Figure \ref{fig:ex_comp_diag}) along with the uniqueness of $(i,j)$ for compositions we have $w = w'\cdot (i,j)$.
\end{proof}

\begin{cor}
The poset in $S_{n}$ defined by the Bruhat order is isomorphic to the poset $\mathcal{PC}_{n}$.
\end{cor}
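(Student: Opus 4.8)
The plan is to deduce the corollary directly from the main theorem, Theorem~\ref{thm:codecovering2}, since the hard combinatorial work is already complete. The strategy rests on the fact that an isomorphism of posets is determined entirely by a bijection on the underlying sets that preserves and reflects the covering relations; for once one knows that the covering relations correspond exactly, the full order relations correspond as well, because in both $S_n$ and $\mathcal{PC}_n$ the partial order is the transitive closure of the covering relation (recall the Bruhat order is generated by its covers on this finite graded set, and $\mathcal{PC}_n$ was \emph{defined} as the transitive closure of its covering relation).

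First I would take the map $\code \colon S_n \to \mathcal{C}_n$ sending $w \mapsto \code(w)$, which is a bijection by Lemma~\ref{lem:perm_by_diagram}. It remains to check that $\code$ and its inverse send covers to covers. This is precisely the content of Theorem~\ref{thm:codecovering2}: $w$ covers $w'$ in the Bruhat order (with $w' = w\cdot(i,j)$) if and only if $\alpha = \code(w)$ covers $\alpha' = \code(w')$ in positions $(i,j)$ in the poset $\mathcal{PC}_n$. Thus $\code$ carries each covering pair of $S_n$ to a covering pair of $\mathcal{C}_n$, and conversely every covering pair of $\mathcal{C}_n$ arises this way, since given $\alpha$ covering $\alpha'$ we may set $w = \code^{-1}(\alpha)$ and $w' = \code^{-1}(\alpha')$ and invoke the theorem in the reverse direction.

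Next I would promote the correspondence of covers to a correspondence of the full orders. Suppose $u \leqslant w$ in the Bruhat order. Then there is a saturated chain $u = w_0 \lessdot w_1 \lessdot \cdots \lessdot w_r = w$ of covers connecting them, because the Bruhat order on $S_n$ is graded by length. Applying $\code$ to each step and using Theorem~\ref{thm:codecovering2}, we obtain a chain $\code(u) = \alpha_0 \lessdot \alpha_1 \lessdot \cdots \lessdot \alpha_r = \code(w)$ of covers in $\mathcal{PC}_n$, whence $\code(u) \leqslant_A \code(w)$ by transitivity. The converse implication is symmetric: if $\code(u) \leqslant_A \code(w)$, then since $\leqslant_A$ is by definition the transitive closure of the covering relation, there is a chain of covers in $\mathcal{PC}_n$ joining them, which $\code^{-1}$ transports back to a chain of Bruhat covers from $u$ to $w$, giving $u \leqslant w$. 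This establishes $u \leqslant w \iff \code(u) \leqslant_A \code(w)$, so $\code$ is an order isomorphism.

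The only point requiring mild care—and the one I expect to be the main (if minor) obstacle—is the appeal to the existence of saturated chains on the Bruhat side, i.e.\ that $u \leqslant w$ implies the existence of a sequence of covers linking them. This is standard, following from the gradedness of the Bruhat order by the length function (each cover raises length by exactly one, and any $u < w$ admits a chain through intermediate lengths); it may simply be cited from Bj\"orner--Brenti~\cite{BB05}. On the $\mathcal{PC}_n$ side no such subtlety arises, as the order was constructed as the transitive closure of covers, so chains exist by fiat. With both directions in hand, the bijection $\code$ preserves and reflects $\leqslant$, completing the proof that the two posets are isomorphic.
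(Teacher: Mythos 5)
Your proof is correct and matches the paper's intended argument: the corollary is stated without proof precisely because it follows immediately from Theorem~\ref{thm:codecovering2} together with the bijectivity of $\code$ (Lemma~\ref{lem:perm_by_diagram}) and the fact that both orders are the transitive closures of their covering relations. Your added care about saturated chains on the Bruhat side (gradedness by length, citable from Bj\"orner--Brenti) is exactly the right routine detail to fill in.
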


\subsection{Interpretation of covering relations in the diagram}\label{subsection:diagrams}

Given two compositions $\alpha=\code(w)$ and $\alpha'=\code(w')$ such that $\alpha$ covers $\alpha'$, suppose that $D'$ is the collection of boxes obtained from the diagram of $\alpha$ where the $(\alpha_{i}'+1)$ box from the $i$-th row was removed. 

If $\alpha_i'=\alpha_i-1$ then $D'$ is already the diagram of $w'$. Now, if $\alpha_i>\alpha'_i-1$, then we must move the remaining box(es) of $D'$ at right of the removed box of until we get the diagram of $w'$. This will be made by a specific move in the $D'$ which is called a ladder move described in Figure \ref{fig:laddermove} (Bergeron-Billey in \cite{BBil93} describes a ladder move as an operation on RC-graphs). 

\begin{figure}[ht]
\includegraphics[scale=1]{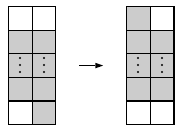}
\caption{Ladder move: both columns are adjacent and the number between the modified rows is arbitrary}
\label{fig:laddermove}
\end{figure}

\begin{prop}\label{prop:laddermoves}
The composition $\alpha$ covers $\alpha'$ in the positions $(i,j)$ if, and only if, there exists a sequence of ladder moves on $D'$ that transforms it to the diagram of $\alpha'$ by moving the remaining boxes in the $i$-th row to the $j$-th row.
\end{prop}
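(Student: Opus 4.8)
The plan is to translate both the auxiliary diagram $D'$ and the ladder moves into the language of the polygonal line that computes $c_{i,\bullet}$, so that the proposition becomes a statement about how this line travels from row $i$ to row $j$. Writing $z=\alpha_i-\alpha_i'$, I would first record the exact shape of $D'$: it is the diagram of $\alpha$ with the single box in row $i$, column $\alpha_i'+1$ deleted, so that row $i$ splits into a left-justified block of length $\alpha_i'$, an empty cell, and a trailing block of the $z-1$ boxes occupying columns $\alpha_i'+2,\dots,\alpha_i$. These $z-1$ boxes are precisely the ones the proposition asks to relocate, and the target configuration, by \eqref{eq:a1}--\eqref{eq:a3}, is a left-justified row $i$ of length $\alpha_i'$ together with a row $j$ extended from length $\alpha_j$ to length $\alpha_j+z-1=\alpha_j'$. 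Crucially, condition \eqref{eq:a4} states that the horizontal displacement accumulated between rows $i$ and $j$ is $c_{i,j}(\alpha)=\alpha_i'-\alpha_j$, which is exactly the leftward shift the boxes must undergo to reach columns $\alpha_j+1,\dots,\alpha_j+z-1$.

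For the forward implication I would build an explicit ladder sequence and verify, step by step, that it is legal. The key point is that Proposition \ref{prop:moves_property}(iv) is the diagrammatic applicability condition: at each intermediate row $k\in[i+1,j-1]$ the dichotomy $\alpha_k+c_{i,k}(\alpha)\geqslant\alpha_i$ or $\alpha_k+c_{i,k}(\alpha)<\alpha_i'$ says that row $k$ is either long enough to support the vertical part of the ladder or short enough to leave clearance below the $\alpha_i'$ threshold, ruling out exactly the intermediate lengths that would obstruct a move. Parts (i) and (ii) of the same proposition then guarantee the landing: the path for index $i$ coincides for $\alpha$ and $\alpha'$ throughout $[i+1,j]$ and registers its sole distinguishing increment at row $j$, where $c_{i,j+1}(\alpha)=1+c_{i,j}(\alpha)$ while $c_{i,j+1}(\alpha')=c_{i,j}(\alpha')$. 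I would formalize this by inducting on the rows $[i+1,j]$ traversed, advancing the trailing block along the polygonal trajectory until all $z-1$ boxes are deposited into row $j$; the resulting diagram then has the same row lengths as $\alpha'$ and hence equals its diagram.

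For the converse, I would begin from an arbitrary legal ladder sequence whose net effect carries the trailing row-$i$ boxes into row $j$ and extract the covering conditions. Ladder moves preserve the total number of boxes and modify only the two rows between which a box travels, so counting boxes yields \eqref{eq:a1}, \eqref{eq:a2}, and \eqref{eq:a3} at once: row $i$ retains $\alpha_i'$ boxes, row $j$ grows from $\alpha_j$ to $\alpha_j+z-1$, and every other row is untouched. The substantive point is \eqref{eq:a4}. For this I would observe that the trailing block moves as a rigid left-justified strip and compute its net leftward translation directly from its initial columns in row $i$ and its final columns in row $j$; matching this translation against the displacement that the $c_{i,\bullet}$ recursion accumulates between rows $i$ and $j$ then forces $c_{i,j}(\alpha)=\alpha_i'-\alpha_j$.

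The step I expect to be the main obstacle is establishing the exact, bidirectional dictionary between a single ladder move and the steps of the $c_{i,\bullet}$ recursion, including the boundary bookkeeping (which row a move lands in, and the off-by-one between the accumulated displacement $c_{i,j}(\alpha)$ and the block's observed translation). In particular, I must show that Proposition \ref{prop:moves_property}(iv) is not merely necessary but sufficient for each individual move to be legal, and that performing the moves in sequence never creates a transient obstruction invisible to a single-box analysis. Controlling how each completed move updates the effective right edge of the lower rows, and verifying that the clearance guaranteed by (iv) persists throughout, is where the real work lies.
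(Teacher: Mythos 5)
Your route is genuinely different from the paper's. The paper obtains Proposition \ref{prop:laddermoves} as a by-product of the proof of Theorem \ref{thm:codecovering2}: each ladder move is the diagrammatic shadow of the Coxeter moves of Proposition \ref{prop:moves_property2} acting on the row-reading words, so the forward implication is read off from that word manipulation, and the converse from the fact that ladder moves do not change the permutation computed by the row-reading. You instead stay entirely inside $\mathcal{C}_n$, using the polygonal-line meaning of $c_{i,\bullet}$ together with Proposition \ref{prop:moves_property}. Your forward direction is sound and is a nice intrinsic counterpart: item (iv) is exactly the pass-over/land dichotomy for the whole trailing block, items (i) and (ii) locate the landing row $j$, and \eqref{eq:a4} fixes the landing columns; the off-by-one you flag is real but harmless (the total leftward displacement is $c_{i,j}(\alpha)+1$, not $c_{i,j}(\alpha)$, because the final landing into row $j$ itself contributes one shift, cf.\ Proposition \ref{prop:moves_property}(ii)).

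The gap is in the converse. First, ``the trailing block moves as a rigid left-justified strip'' is not an observation about an arbitrary legal sequence; it is precisely what must be proved, and the proof has to invoke the part of the ladder-move definition your sketch never uses: the moved box must have a vacant cell to its left (Figure \ref{fig:laddermove}, as in Bergeron--Billey). That vacancy condition forces the trailing boxes to be processed left-to-right and makes each move's destination unique, which is what rules out pathological orderings; without it, boxes can move out of order (for instance, a box to the right can land in a partially filled row and thereby ``unstick'' a box to its left by filling the gap that blocked it), and intermediate configurations need not be rigid translates of the block at all. Second, even granting rigidity, your displacement count only yields half of \eqref{eq:a4}: it forces $c_{i,j}(\alpha)=\alpha_i'-\alpha_j$, but \eqref{eq:a4} also requires $c_{i,j}(\alpha')=\alpha_i'-\alpha_j$, and this does not follow from Lemma \ref{lem:compare_comp}, which gives only the inequality $c_{i,j}(\alpha')\leqslant c_{i,j}(\alpha)$. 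You need a separate induction on $k\in[i+1,j]$, using the pass/land dichotomy extracted from the legal sequence, to show that the recursions for $\alpha$ and $\alpha'$ agree all the way up to row $j$. With these two repairs your intrinsic argument does go through, and it has the merit of proving the proposition without the Section \ref{sec:equivalence} machinery, which the paper's own justification cannot avoid.
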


The proof of this proposition comes from the proof of Theorem \ref{thm:codecovering2}. The condition \eqref{eq:a4} guarantees the existence of this sequence of ladder moves.

Proposition \ref{prop:laddermoves} also resembles the excitations of boxes in the diagrams as introduced by Ikeda-Naruse \cite{IN08} in the context of the Grassmannian permutations.

For example, if $\alpha = (4,5,4,1,0,2,0)$ and $\alpha' = (2,5,4,1,0,2,1)$ then the collection of boxes $D'$ is obtained from the diagram of $\alpha$ by removing the third box in the bottom row from the diagram of $w$. The set $D'$ can be modified into $\alpha'$ through consecutive ladder moves in the diagram as we can see in Figure \ref{fig:ex_laddermove}.

\begin{figure}[ht]
\includegraphics[scale=0.75]{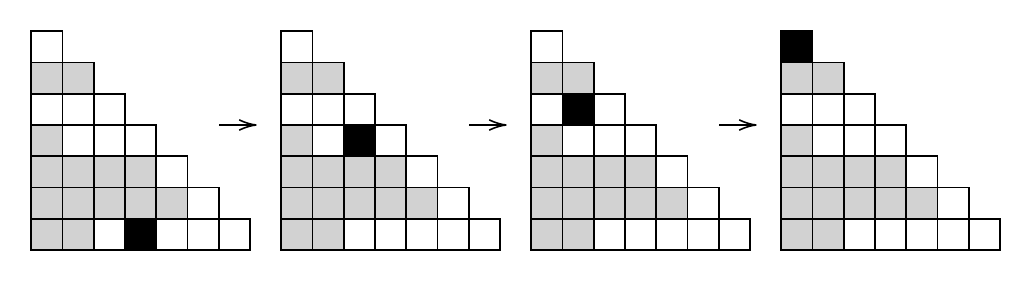}
\caption{Ladder moves applied to $D'$ result in the diagram of $\alpha' = (2,5,4,1,0,2,1)$.}
\label{fig:ex_laddermove}
\end{figure}

\section{Removable and insertable compositions}\label{sec:rem_ins_comp}

As a byproduct of the partial order in $\mathcal{C}_{n}$, this section presents an algorithm based on a removal-insertion process for determining covering relations in the poset.

\subsection{Removable compositions}

Let $\alpha\in \mathcal{C}_{n}$ be a composition.
Given $i\in [n-1]$ and $z\in [\alpha_{i}]$, we say that $\alpha$ is $(i,z)$-removable if there exists $\alpha'$ such that $\alpha$ covers $\alpha'$ and $\alpha_{i}'=\alpha_{i}-z$. We will show in the next lemma that such $\alpha'$ is unique.
Since the covering relation among $\alpha$ and $\alpha'$ requires a pair $i<j$, we now address the question of finding the corresponding $j$ for each $i$ and $z$. 

First of all, set the composition $\tilde{\alpha}$ as $\tilde\alpha_{i}=\alpha_{i}-z$ and $\tilde\alpha_{m}=\alpha_{m}$ for $m\neq i$.
Denote by $\tilde{J}_{\alpha}(i,z)$ the greatest $j>i$ such that $c_{i,j}(\alpha) = c_{i,j}(\tilde{\alpha})$, i.e., 
\begin{equation}\label{eq:tildeJ}
\tilde{J}_{\alpha}(i,z)=\max\{j>i \colon c_{i,j}(\alpha) = c_{i,j}(\tilde{\alpha})\}.
\end{equation}

Such $\tilde{J}_{\alpha}(i,z)$ exists due to Lemma \ref{lem:c_properties}. In fact, by Lemma \ref{lem:c_properties}(iii), there exists $j_1$ and $j_2$ such that $c_{i,j_1}(\alpha)=\alpha_i$ and $c_{i,j_2}(\tilde\alpha)=\tilde\alpha_i$. Since $\tilde\alpha_i < \alpha_i$, then there is a greatest $j$ such that $c_{i,j}(\alpha)=c_{i,j}(\tilde\alpha)$.
If it is clear, we will denote $\tilde{J}(i,z)=\tilde{J}_{\alpha}(i,z)$. 

\begin{lem}\label{lem:remove_uniquely}
If $\alpha$ is $(i,z)$-removable then there exists a unique composition $\alpha'$ such that $\alpha$ covers $\alpha'$ and $\alpha_{i}'=\alpha_{i}-z$. In this case, $\alpha$ covers the composition $\alpha'$ defined by $\alpha'_{i} = \alpha_{i}-z$, $\alpha'_{\tilde{J}(i,z)}=\alpha_{\tilde{J}(i,z)}+z-1$, and $\alpha'_{m}=\alpha_{m}$ for $m\neq i \mbox{ or } \tilde{J}(i,z)$.
\end{lem}
\begin{proof}
Assume that $\alpha$ covers $\alpha'$ in positions $(i,j)$ such that $\alpha_{i}'=\alpha_{i}-z$. 
We claim that $c_{i,k}(\alpha') = c_{i,k}(\tilde{\alpha})$ for any $k\in [i+1, j]$. In fact, since $\alpha'$ and $\tilde{\alpha}$ only differ for $\alpha'_{j} \geqslant \tilde{\alpha}_{j}$, it follows from Lemma \ref{lem:compare_comp} that $c_{i,k}(\alpha') \leqslant c_{i,k}(\tilde{\alpha})\leqslant c_{i,k}(\alpha)$ for any $k>i$ and $c_{i,k}(\tilde\alpha) \leqslant c_{i,k}(\alpha')$ for any $k\in [i+1, j]$.

Suppose that $\tilde{J}(i,z)<j$. It follows that $c_{i,\tilde{J}(i,z)+1}(\alpha) \neq c_{i,\tilde{J}(i,z)+1}(\tilde\alpha) = c_{i,\tilde{J}(i,z)+1}(\alpha')$ which contradicts Proposition \ref{prop:moves_property}(i). Hence $\tilde{J}(i,z)\geq j$. Notice that, by condition \eqref{eq:a4}, $\tilde\alpha_{j}=\alpha_{j}=\alpha_{i}'-c_{i,j}(\alpha')=\tilde{\alpha}_{i}-c_{i,j}(\tilde{\alpha})$. Then, $c_{i,j+1}(\tilde{\alpha})=c_{i,j}(\tilde{\alpha})= c_{i,j}(\alpha') = c_{i,j}({\alpha})\neq c_{i,j+1}({\alpha})$, which implies $j=\tilde{J}(i,z)$.
\end{proof}

Since $\alpha'$ in Lemma \ref{lem:remove_uniquely} is unique then we will call $\alpha'$ the $(i,z)$-removal of $\alpha$.

The next proposition gives an equivalent characterization for $\alpha$ be $(i,z)$-removable.

We define $k_{\alpha}(i)$ to be the first $k$ greater than $i$ such that $\alpha_{k}<\alpha_{i}$, i.e., 
\begin{equation}\label{eq:kalphai}
k_{\alpha}(i)=\min\{k>i \colon \alpha_{k}<\alpha_{i}\}.
\end{equation}

\begin{prop}\label{prop:remove_condition}
Let $\alpha$ be a composition, $i\in [n-1]$ and $z\in [\alpha_{i}]$. We have the following:
\begin{enumerate}
\item[(i)] $\alpha$ is $(i,z)$-removable if, and only if, $c_{i,\tilde{J}(i,z)}(\alpha) = \alpha_{i}-\alpha_{\tilde{J}(i,z)}-z$;
\item[(ii)] If $\alpha$ is $(i,z)$-removable then $z\leqslant\alpha_{i}-\alpha_{k_{\alpha}(i)}$;
\item[(iii)] If $\alpha_{i}>0$ then $\alpha$ is always $(i,1)$-removable.
\end{enumerate}
\end{prop}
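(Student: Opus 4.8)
The plan is to deduce all three items from the characterization in item (i), which itself follows quickly from the covering conditions \eqref{eq:a1}--\eqref{eq:a4} together with the uniqueness in Lemma \ref{lem:remove_uniquely}. Throughout write $J=\tilde{J}(i,z)$ and let $\tilde\alpha$ be the composition with $\tilde\alpha_{i}=\alpha_{i}-z$ and $\tilde\alpha_{m}=\alpha_{m}$ for $m\neq i$, so that $c_{i,J}(\alpha)=c_{i,J}(\tilde\alpha)$ by definition of $J$. The structural fact I will use repeatedly is that, by its recursion, $c_{i,j}(\beta)$ depends only on the entries $\beta_{i},\beta_{i+1},\dots,\beta_{j-1}$. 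For item (i), the forward implication is immediate: if $\alpha$ is $(i,z)$-removable then Lemma \ref{lem:remove_uniquely} says $\alpha$ covers its $(i,z)$-removing $\alpha'$ in position $(i,J)$, and condition \eqref{eq:a4} reads $c_{i,J}(\alpha)=\alpha'_{i}-\alpha_{J}=\alpha_{i}-z-\alpha_{J}$, which is the claimed equality. For the converse I take the candidate $\alpha'$ of Lemma \ref{lem:remove_uniquely}, namely $\alpha'_{i}=\alpha_{i}-z$, $\alpha'_{J}=\alpha_{J}+z-1$, and $\alpha'_{m}=\alpha_{m}$ otherwise, and verify the four conditions. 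Conditions \eqref{eq:a1}, \eqref{eq:a2}, \eqref{eq:a3} and $|\alpha|=|\alpha'|+1$ are direct from the construction and from $1\leqslant z\leqslant\alpha_{i}$. For \eqref{eq:a4} I observe that $\alpha'$ and $\tilde\alpha$ agree in every position except $J$; since $c_{i,J}$ ignores the $J$-th entry, $c_{i,J}(\alpha')=c_{i,J}(\tilde\alpha)=c_{i,J}(\alpha)$, and the hypothesis $c_{i,J}(\alpha)=\alpha_{i}-z-\alpha_{J}=\alpha'_{i}-\alpha_{J}$ then yields \eqref{eq:a4}.

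The technical engine behind items (ii) and (iii) is the behaviour of $d(j)=c_{i,j}(\alpha)-c_{i,j}(\tilde\alpha)$, which I claim is non-decreasing with $0\leqslant d(j)\leqslant z$. The two increment thresholds at step $j$, namely $\alpha_{i}-c_{i,j}(\alpha)$ for $\alpha$ and $\tilde\alpha_{i}-c_{i,j}(\tilde\alpha)$ for $\tilde\alpha$, have difference exactly $z-d(j)$. An induction on $j$ from $d(i+1)=0$ shows that whenever $0\leqslant d(j)\leqslant z$ the $\tilde\alpha$-threshold does not exceed the $\alpha$-threshold, so $\tilde\alpha$ cannot increment without $\alpha$ also incrementing; this gives $d(j)\leqslant d(j+1)$, and a short check at $d(j)=z$ gives $d(j+1)\leqslant z$. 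Consequently $J=\max\{j>i:d(j)=0\}$ is well defined (its existence is already recorded before the statement) and $d(j)\geqslant 1$ for every $j>J$.

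For item (ii) set $k=k_{\alpha}(i)$. Since $\alpha_{m}\geqslant\alpha_{i}>\alpha_{i}-z=\tilde\alpha_{i}$ for $i<m<k$, the recursion gives $c_{i,j}(\alpha)=c_{i,j}(\tilde\alpha)=0$ for all $i<j\leqslant k$, whence $J\geqslant k$. Suppose, towards the contrapositive, that $z>\alpha_{i}-\alpha_{k}$. At the step $k\to k+1$ one has $\alpha_{k}<\alpha_{i}-0$ but $\alpha_{k}\geqslant\alpha_{i}-z=\tilde\alpha_{i}-0$, so $\alpha$ increments while $\tilde\alpha$ does not; hence $d(k+1)=1$, and by the monotonicity just proved $J=k$. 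Then $c_{i,J}(\alpha)=c_{i,k}(\alpha)=0$, whereas $\alpha_{i}-\alpha_{k}-z<0$, so the equality of item (i) fails and $\alpha$ is not $(i,z)$-removable. This is precisely the contrapositive of the assertion, so $(i,z)$-removability forces $z\leqslant\alpha_{i}-\alpha_{k_{\alpha}(i)}$.

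Finally, item (iii) is the boundary case $z=1$. Since $d(J)=0$ and $d(J+1)\geqslant 1$, at the step $J\to J+1$ the composition $\alpha$ increments while $\tilde\alpha$ does not; using $c_{i,J}(\tilde\alpha)=c_{i,J}(\alpha)$ and $z=1$ this reads $\alpha_{i}-c_{i,J}(\alpha)-1\leqslant\alpha_{J}<\alpha_{i}-c_{i,J}(\alpha)$, and integrality forces $c_{i,J}(\alpha)=\alpha_{i}-\alpha_{J}-1$, which is exactly the condition of item (i) for $z=1$. As $\alpha_{i}>0$ guarantees $1\in[\alpha_{i}]$, the composition $\alpha$ is $(i,1)$-removable. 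I expect the monotonicity of $d(j)$ in the second paragraph to be the only genuinely delicate point; everything else reduces to bookkeeping with the recursion for $c_{i,j}$ and to Lemma \ref{lem:remove_uniquely}. (Note that the uniqueness pinning of $\alpha_{J}$ in the last step uses $z=1$ essentially: for $z>1$ the two inequalities only confine $\alpha_{J}$ to an interval of length $z$, which is why $(i,1)$-removability is automatic while general $(i,z)$-removability is not.)
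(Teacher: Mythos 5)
Your proof is correct. For items (i) and (iii) it is essentially the paper's argument: the forward half of (i) comes from Lemma \ref{lem:remove_uniquely} together with condition \eqref{eq:a4}; the converse is by verifying \eqref{eq:a1}--\eqref{eq:a4} for the explicit candidate $\alpha'$, where your observation that $c_{i,J}$ depends only on the entries in positions $i,\dots,J-1$ (so that $c_{i,J}(\alpha')=c_{i,J}(\tilde\alpha)=c_{i,J}(\alpha)$) is exactly what the paper extracts from Lemma \ref{lem:compare_comp}; and (iii) is the same threshold-trapping argument at the step $J\to J+1$, forcing $c_{i,J}(\alpha)=\alpha_{i}-\alpha_{J}-1$ by integrality. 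The genuine divergence is in item (ii). The paper splits into the cases $\tilde{J}(i,z)=k_{\alpha}(i)$ and $\tilde{J}(i,z)>k_{\alpha}(i)$ and, in the latter, applies Proposition \ref{prop:moves_property}(i) to the covering pair $(\alpha,\alpha')$ to conclude $\alpha_{k_{\alpha}(i)}<\alpha_{i}'$, hence $z<\alpha_{i}-\alpha_{k_{\alpha}(i)}$. You instead argue by contrapositive entirely inside the recursion for $c$: your monotonicity lemma, that $d(j)=c_{i,j}(\alpha)-c_{i,j}(\tilde\alpha)$ is non-decreasing and bounded above by $z$ (proved by noting the two increment thresholds differ by exactly $z-d(j)$, and that the difference stabilizes once $d=z$), shows that $z>\alpha_{i}-\alpha_{k_{\alpha}(i)}$ forces $d$ to jump to $1$ at the step $k_{\alpha}(i)\to k_{\alpha}(i)+1$, hence $\tilde{J}(i,z)=k_{\alpha}(i)$, and then the criterion of (i) fails because $\alpha_{i}-\alpha_{k_{\alpha}(i)}-z<0=c_{i,k_{\alpha}(i)}(\alpha)$. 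What this buys is self-containedness: you never invoke Proposition \ref{prop:moves_property} (a statement about covering pairs), only properties of the auxiliary composition $\tilde\alpha$; your monotonicity claim is in effect a cleaner, $\tilde\alpha$-specific analogue of inequality \eqref{eq:lemaACover}, which the paper establishes for covering pairs. The cost is having to prove that claim, but the induction you sketch (threshold comparison, plus the check at $d(j)=z$) is complete and sound.
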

\begin{proof}
(i): Suppose that $\alpha$ is $(i,z)$-removable. Then, $\alpha'$ as defined in Lemma \ref{lem:remove_uniquely} is covered by $\alpha$. Hence, condition \eqref{eq:a4} gives $c_{i,\tilde{J}(i,z)}(\alpha) = \alpha_{i}-\alpha_{\tilde{J}(i,z)}-z$. 

Conversely, $\alpha'$ defined by $\alpha'_{i} = \alpha_{i}-z$, $\alpha'_{\tilde{J}(i,z)}=\alpha_{\tilde{J}(i,z)}+z-1$, and $\alpha'_{m}=\alpha_{m}$ for $m\neq i \mbox{ or } \tilde{J}(i,z)$ satisfies conditions \eqref{eq:a1}, \eqref{eq:a2}, \eqref{eq:a3}, and partially \eqref{eq:a4}. It remains to prove that $c_{i,\tilde{J}(i,z)}(\alpha)=c_{i,\tilde{J}(i,z)}(\alpha')$. By Lemma \ref{lem:compare_comp}, we have that $c_{i,\tilde{J}(i,z)}(\alpha')=c_{i,\tilde{J}(i,z)}(\tilde{\alpha})=c_{i,\tilde{J}(i,z)}(\alpha)$.

(ii): By definition of $k_{\alpha}(i)$, $c_{i,k}(\alpha)=0$ for $k\in[k_{\alpha}(i)]$, $c_{i,k_{\alpha}(i)+1}(\alpha)=1$, and $\tilde{J}(i,z)\geqslant k_{\alpha}(i)$. If $\tilde{J}(i,z) = k_{\alpha}(i)$ then, by assertion (i), $z = \alpha_{i} -\alpha_{k_{\alpha}(i)}-c_{i,k_{\alpha}(i)}(\alpha) = \alpha_{i}-\alpha_{k_{\alpha}(i)}$. If $\tilde{J}(i,z)>k_{\alpha}(i)$ then, by Proposition \ref{prop:moves_property}(i), $c_{i,k_{\alpha}(i)+1}(\alpha')=c_{i,k_{\alpha}(i)+1}(\alpha)=1$ and, hence, by definition and condition \eqref{eq:a3}, $\alpha_{k_{\alpha}(i)}=\alpha'_{k_{\alpha}(i)} < \alpha_{i}' - c_{i,\alpha_{k_{\alpha}(i)}}(\alpha') = \alpha_{i}'$. Then, $z = \alpha_{i}-\alpha_{i}' < \alpha_{i}-\alpha_{k_{\alpha}(i)}$.

(iii): Let $\alpha'$ be defined by $\alpha'_{i} = \alpha_{i}-1$ and $\alpha'_{k}=\alpha_{k}$ for $k\neq i$.
By definition of $\tilde{J}(i,z)$, $c_{i,\tilde{J}(i,z)+1}(\alpha')< c_{i,\tilde{J}(i,z)+1}(\alpha)$ and, hence, $c_{i,\tilde{J}(i,z)+1}(\alpha')=c_{i,\tilde{J}(i,z)}(\alpha')$ and $c_{i,\tilde{J}(i,z)+1}(\alpha)=1+c_{i,\tilde{J}(i,z)}(\alpha)$. By definition, $\alpha_{\tilde{J}(i,z)}< \alpha_{i}- c_{i,\tilde{J}(i,z)} (\alpha) = \alpha_{i}'+1-c_{i,\tilde{J}(i,z)}(\alpha)$ and $\alpha_{\tilde{J}(i,z)}=\alpha_{\tilde{J}(i,z)}'\geqslant \alpha_{i}'- c_{i,\tilde{J}(i,z)}(\alpha')=\alpha_{i}' - c_{i,\tilde{J}(i,z)}(\alpha)$. Reordering both inequalities we have that $\alpha_{i}'-\alpha_{\tilde{J}(i,z)}\leqslant c_{i,\tilde{J}(i,z)}(\alpha)<\alpha_{i}'+1-\alpha_{\tilde{J}(i,z)}$. Hence, $c_{i,\tilde{J}(i,z)}(\alpha)=\alpha_{i}'-\alpha_{\tilde{J}(i,z)}=\alpha_{i}-\alpha_{\tilde{J}(i,z)}-1$ and, by item(i), $\alpha$ is $(i,1)$-removable.
\end{proof}

Summarizing, given a composition $\alpha \in \mathcal{C}_n$ and an index $i\in [n-1]$, we can determine $z$ such that $\alpha$ is $(i,z)$-removable by following the steps below:
\begin{enumerate}
\item Compute the possible values of $z$ by the range $[1,\alpha_i-\alpha_{k_{\alpha}(i)}]$ (cf. Equation \ref{eq:kalphai}); 
\item If $z=1$ and $\alpha_{i}>0$, then $\alpha$ is $(i,1)$-removable;
\item For each $2\leq z\leq \alpha_i-\alpha_{k_{\alpha}(i)}$, proceed as follows: 
\begin{enumerate}
\item Start with $\tilde{\alpha}$; 
\item Compute $c_{i,j}(\tilde{\alpha})$ and $\tilde{J}(i,z)$ (cf. Equation \ref{eq:tildeJ});
\item If $c_{i,\tilde{J}(i,z)}(\alpha) = \alpha_{i}-\alpha_{\tilde{J}(i,z)}-z$, then $\alpha$ is $(i,z)$-removable. Define $\alpha'$ by:
\begin{equation*}
\alpha'_{i} = \alpha_{i}-z \, , \,  \alpha'_{\tilde{J}(i,z)}=\alpha_{\tilde{J}(i,z)}+z-1, \mbox{ and }\alpha'_{m}=\alpha_{m}, \mbox{ for } m\neq i \mbox{ or } \tilde{J}(i,z),
\end{equation*}
as stated in Lemma \ref{lem:remove_uniquely}.
\end{enumerate}
\end{enumerate}

For instance, let $\alpha=(4,5,4,1,0,2,0) \in \mathcal{C}_8$. Let us determine when $\alpha$ is $(1,z)$-removable, with $z\in[\alpha_1]=[4]$. Since $k_{\alpha}(1)=4$, it follows that $z\leq \alpha_{1}-\alpha_{4}=3$, i.e., $\alpha$ is not $(1,4)$-removable. Table \ref{tab:removals} presents the $(1,z)$-removals $\alpha'$ of $\alpha$, for $z=1,2,3$. We remember that $c_{1,j}(\alpha)=(0,0,0,0,1,2,2,3,4)$. In each case, we have that $c_{i,\tilde{J}(i,z)}(\alpha) = \alpha_{i}-\alpha_{\tilde{J}(i,z)}-z$. 

\begin{table}[h!]
\centering
\renewcommand{\arraystretch}{1.4}
\begin{tabular}{>{\centering\arraybackslash}p{1.0cm}>{\centering\arraybackslash}p{3.6cm}>{\centering\arraybackslash}p{1.2cm}>{\centering\arraybackslash}p{1.2cm}>{\centering\arraybackslash}p{3.2cm}}
\toprule
${z}$ & ${c_{1,j}(\widetilde{\alpha})}$ & ${\widetilde{J}(1,z)}$ & ${\alpha'_{\widetilde{J}(1,z)}}$ & ${\alpha'}$ \\
\midrule
$1$ & $(\mathbf{0,0,0,0,1,2,2,3},3)$ & $8$ & $-$ & $(\mathbf{3},5,4,1,0,2,0)$ \\

$2$ & $(\mathbf{0,0,0,0,1,2,2},2,2)$ & $7$ & $1$ & $(\mathbf{2},5,4,1,0,2,\mathbf{1})$ \\

$3$ & $(\mathbf{0,0,0,0},0,1,1,1,1)$ & $4$ & $3$ & $(\mathbf{1},5,2,\mathbf{3},0,2,0)$ \\
\bottomrule
\end{tabular}
\caption{The $(1,z)$-removals of $\alpha=(4,5,4,1,0,2,0)$.}
\label{tab:removals}
\end{table}

Figure~\ref{fig:alpha_removings} illustrates this process using diagrams. Eventually, after a sequence of ladder moves, illustrated by the polygonal paths, one obtains the covering diagrams resulting from removing a box in the first row, which is marked by a diamond-shaped box.

\begin{figure}[ht]
\centering
\includegraphics[scale=.75]{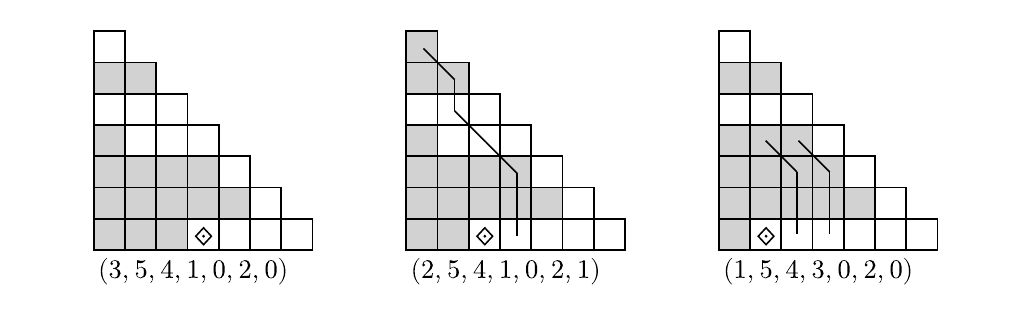}
\caption{The diagrams of the $(1,z)$-removals compositions of $\alpha=(4,5,4,1,0,2,0)$.}
\label{fig:alpha_removings}
\end{figure}

The product order of $C_{n}$ is given by $\alpha' \leqslant \alpha$ if, and only if, $\alpha_{i}'\leqslant \alpha_{i}$ for all $i\in[n-1]$.

The next corollary states that $\mathcal{PC}_{n}$ is a refinement of the product order of $C_{n}$.

\begin{cor}\label{prop:productorder}
Let $\alpha$ and $\alpha'$ be compositions in $\mathcal{C}_{n}$ such that $\alpha'\leqslant \alpha$. Then $\alpha' \preceq \alpha$ in the poset $\mathcal{PC}_{n}$.
\end{cor}
\begin{proof}
Notice that if $\alpha$ and $\alpha'$ are compositions such that $|\alpha|=|\alpha'|+1$, $\alpha_{i}' = \alpha_{i} - 1$ and $\alpha_{k}' = \alpha_{k} \mbox{ for every } k \neq i$ then $\alpha$ covers $\alpha'$.
\end{proof}

\begin{prop}\label{prop:compare_remov}
Let $\alpha$ be $(i,z_{1})$-removable and also $(i,z_{2})$-removable. Then,
\begin{enumerate}
\item[(i)] $z_{1}=z_{2}$ if, and only if, $\tilde{J}(i,z_{1})=\tilde{J}(i,z_{2})$;
\item[(ii)] $z_{1}>z_{2}$ if, and only if, $\tilde{J}(i,z_{1})<\tilde{J}(i,z_{2})$.
\end{enumerate}
\end{prop}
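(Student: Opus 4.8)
The plan is to reduce both statements to a single monotonicity fact: the assignment $z\mapsto \tilde{J}(i,z)$ is \emph{strictly decreasing} on the set of integers $z\in[\alpha_i]$ for which $\alpha$ is $(i,z)$-removable. Once this is in hand, (i) and (ii) follow by pure logic, since a strictly decreasing integer-valued map is both injective and order-reversing. Concretely, (i) holds because $\tilde{J}(i,\cdot)$ is a function ($z_1=z_2\Rightarrow$ equal $\tilde{J}$) and, by strict monotonicity, injective (equal $\tilde{J}\Rightarrow z_1=z_2$); the forward direction of (ii) \emph{is} strict monotonicity, and its converse follows by trichotomy, since $z_1\leqslant z_2$ would force $\tilde{J}(i,z_1)\geqslant \tilde{J}(i,z_2)$, contradicting $\tilde{J}(i,z_1)<\tilde{J}(i,z_2)$. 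So I would isolate strict monotonicity as the one thing to prove.

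For the weak inequality I would fix removable values $z_1>z_2$ and let $\tilde\alpha^{z_1},\tilde\alpha^{z_2}$ be the associated compositions, each lowering the $i$-th part (to $\alpha_i-z_1$ and $\alpha_i-z_2$ respectively) and leaving all other parts equal to those of $\alpha$. Since $\tilde\alpha^{z_1}_i=\alpha_i-z_1\leqslant \alpha_i-z_2=\tilde\alpha^{z_2}_i$ while $\tilde\alpha^{z_1}_k=\tilde\alpha^{z_2}_k=\alpha_k$ for every $k\neq i$, Lemma \ref{lem:compare_comp} applies in each of the two comparisons (with $l$ taken as large as needed, as all parts away from $i$ agree) and yields the sandwich $c_{i,k}(\tilde\alpha^{z_1})\leqslant c_{i,k}(\tilde\alpha^{z_2})\leqslant c_{i,k}(\alpha)$ for all $k>i$. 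Evaluating at $j=\tilde{J}(i,z_1)$, where by definition $c_{i,j}(\tilde\alpha^{z_1})=c_{i,j}(\alpha)$, the sandwich collapses and forces $c_{i,j}(\tilde\alpha^{z_2})=c_{i,j}(\alpha)$ as well. Hence $j$ belongs to the set whose maximum defines $\tilde{J}(i,z_2)$, giving $\tilde{J}(i,z_1)\leqslant \tilde{J}(i,z_2)$.

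To upgrade this to a strict inequality I would invoke the removability hypothesis through Proposition \ref{prop:remove_condition}(i). Were $\tilde{J}(i,z_1)=\tilde{J}(i,z_2)=j$, then applying that characterization to both removable values would give $c_{i,j}(\alpha)=\alpha_i-\alpha_j-z_1$ and simultaneously $c_{i,j}(\alpha)=\alpha_i-\alpha_j-z_2$, forcing $z_1=z_2$ and contradicting $z_1>z_2$; thus $\tilde{J}(i,z_1)<\tilde{J}(i,z_2)$, which is the strict monotonicity claimed. I expect the passage from the weak to the strict inequality to be the main obstacle: Lemma \ref{lem:compare_comp} by itself orders the $\tilde{J}$'s only weakly and a priori permits distinct $z$ to collide at a common $\tilde{J}$, so one genuinely needs the extra rigidity supplied by condition \eqref{eq:a4} — packaged as Proposition \ref{prop:remove_condition}(i) — to exclude such collisions and pin $c_{i,\tilde{J}(i,z)}(\alpha)$ down to the single value $\alpha_i-\alpha_{\tilde{J}(i,z)}-z$.
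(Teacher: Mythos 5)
Your proof is correct, and it takes a cleaner route than the paper's at one key point. The paper proves (i) first, by applying condition \eqref{eq:a4} to the two covered compositions $\alpha'$, $\alpha''$, and then proves the weak inequality $\tilde{J}(i,z_1)\leqslant\tilde{J}(i,z_2)$ in (ii) by applying Lemma \ref{lem:compare_comp} to those same covered compositions; this step silently relies on the identities $c_{i,k}(\alpha')=c_{i,k}(\alpha)$ on $[i+1,\tilde{J}(i,z_1)]$ and on identifying the $c$-rows of $\alpha''$ with those of the truncated composition defining $\tilde{J}(i,z_2)$ — facts that come from Proposition \ref{prop:moves_property}(i) and the proof of Lemma \ref{lem:remove_uniquely}, not ``by definition'' as stated. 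You avoid all of this by running Lemma \ref{lem:compare_comp} on the auxiliary compositions $\tilde\alpha^{z_1}$, $\tilde\alpha^{z_2}$ themselves, for which the lemma's hypotheses are immediate and for which $\tilde{J}$ is literally defined; the sandwich $c_{i,k}(\tilde\alpha^{z_1})\leqslant c_{i,k}(\tilde\alpha^{z_2})\leqslant c_{i,k}(\alpha)$ collapses at $k=\tilde{J}(i,z_1)$ and yields the weak inequality with no reference to covering data at all (indeed your weak monotonicity holds for arbitrary $z$, removable or not). Removability then enters exactly once, through Proposition \ref{prop:remove_condition}(i), to rule out collisions — which is the same rigidity the paper extracts from \eqref{eq:a4} in its proof of (i), just packaged one level up. What the paper's route buys is that it stays entirely inside the covering framework of conditions \eqref{eq:a1}--\eqref{eq:a4}; what yours buys is a shorter dependency chain, a cleanly isolated monotonicity statement from which (i) and (ii) follow by pure order logic, and the repair of the slightly glossed step in the published argument.
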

\begin{proof}
Let $\alpha'$ and $\alpha''$ be the compositions covered by $\alpha$ such that $\alpha'_{i}=\alpha_{i}-z_{1}$ and $\alpha''_{i}=\alpha_{i}-z_{2}$, respectively.

Clearly, if $z_{1}=z_{2}$ then $\tilde{J}(i,z_{1})=\tilde{J}(i,z_{2})$. Conversely, if $\tilde{J}(i,z_{1})=\tilde{J}(i,z_{2})=j$ then, by condition \eqref{eq:a4} of $\alpha'$ and $\alpha''$, we have that $c_{i,j}(\alpha)=\alpha_{i}'-\alpha_{j}=\alpha_{i}''-\alpha_{j}$, i.e., $\alpha_{i}'=\alpha_{i}''$. Hence $z_1=\alpha_{i}-\alpha_{i}'=\alpha_{i}-\alpha_{i}''=z_{2}$.

Suppose that $z_{1}>z_{2}$, i.e., $\alpha'_{i}<\alpha''_{i}$. 
By Lemma \ref{lem:compare_comp}, $c_{i,k}(\alpha'')\leqslant c_{i,k}(\alpha)$ for every $k>i$, and $c_{i,k}(\alpha') \leqslant c_{i,k}(\alpha'')$ for $k\in[i+1,\tilde{J}(i,z_{2})]$. By definition, $c_{i,k}(\alpha') = c_{i,k}(\alpha)$ for $k\in [i+1, \tilde{J}(i,z_{1})]$ which implies that $c_{i,k}(\alpha) = c_{i,k}(\alpha'')$ for any $k\in [i+1,\tilde{J}(i,z_{1})]$. Hence, $\tilde{J}(i,z_{1})\leqslant \tilde{J}(i,z_{2})$. By assertion (i), they must differ.

The converse follows from assertion (i) and the analogous fact if $z_{1}<z_{2}$ then $\tilde{J}(i,z_{1})>\tilde{J}(i,z_{2})$.
\end{proof}

Hence, Proposition \ref{prop:compare_remov} says that $\tilde{J}(i,\cdot)$ is an injective (\ref{prop:compare_remov}(i)) and strictly decreasing (\ref{prop:compare_remov}(ii)) function of $z$. It means that as the position of the removed box moves leftward, the index of the corresponding row decreases.

\subsection{Insertable compositions}

Given a composition $\alpha$ in $\mathcal{C}_{n}$, $i\in [n-1]$ and $z\in [n-i-\alpha_{i}]$, we say that $\alpha\in \mathcal{C}_{n}$ is $(i,z)$-insertable if there exists a composition $\alpha''\in \mathcal{C}_{n}$ such that $\alpha_{i}''=\alpha_{i}+z$ and $\alpha$ is covered by $\alpha''$.

For every composition $\alpha\in \mathcal{C}_{n}$, denote by $\alpha^{*}$ the composition in $\mathcal{C}_{n}$ such that $\alpha_{i}^{*}=n-i-\alpha_{i}$, for every $i\in [n-1]$.

\begin{lem}\label{lem:coverdual}
We have the following properties:
\begin{enumerate}
\item[(i)] For $i<j\leqslant n+1$, we have $c_{i,j}(\alpha^{*}) = j - i - 1 - c_{i,j}(\alpha)$;
\item[(ii)] If $\alpha$ covers $\alpha'$ then $\alpha^{*}$ is covered by $(\alpha')^{*}$.
\end{enumerate}
\end{lem}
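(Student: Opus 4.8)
The plan is to prove (i) by induction on $j$ and then deduce (ii) by showing that the \emph{same} pair of positions $(i,j)$ witnesses a covering after complementation, with (i) supplying the only nontrivial input.

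For (i), the base case $j=i+1$ is immediate, since both $c_{i,i+1}(\alpha^{*})$ and $j-i-1-c_{i,j}(\alpha)$ vanish. For the inductive step I would substitute $\alpha^{*}_{i}=n-i-\alpha_{i}$, $\alpha^{*}_{j}=n-j-\alpha_{j}$ and the inductive hypothesis $c_{i,j}(\alpha^{*})=j-i-1-c_{i,j}(\alpha)$ into the defining inequality for $c_{i,j+1}(\alpha^{*})$. A direct cancellation shows that the triggering condition $\alpha^{*}_{j}<\alpha^{*}_{i}-c_{i,j}(\alpha^{*})$ is equivalent to $\alpha_{j}\geqslant \alpha_{i}-c_{i,j}(\alpha)$, using integrality to convert the strict inequality into its complementary non-strict one. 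Hence $c(\alpha^{*})$ increases by $1$ from column $j$ to column $j+1$ exactly when $c(\alpha)$ stays constant, and conversely; in either case the sum $c_{i,j}(\alpha^{*})+c_{i,j}(\alpha)=j-i-1$ is preserved, closing the induction.

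For (ii), suppose $\alpha$ covers $\alpha'$ in positions $(i,j)$. Since $\alpha^{*}$ and $(\alpha')^{*}$ arise from $\alpha$ and $\alpha'$ by the substitution $\beta_{k}\mapsto n-k-\beta_{k}$, they agree off positions $i$ and $j$, and the inequality $\alpha'_{i}\leqslant\alpha_{i}-1$ of \eqref{eq:a1} becomes $(\alpha')^{*}_{i}\geqslant \alpha^{*}_{i}+1$, so complementation reverses the relative order at position $i$. The natural claim is therefore that $(\alpha')^{*}$ \emph{covers} $\alpha^{*}$ in the same positions $(i,j)$. I would first record the weight bookkeeping $|\beta^{*}|=\sum_{k}(n-k)-|\beta|$, which turns $|\alpha|=|\alpha'|+1$ into $|(\alpha')^{*}|=|\alpha^{*}|+1$. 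Then conditions \eqref{eq:a1}, \eqref{eq:a2}, \eqref{eq:a3} for $(\alpha')^{*}$ covering $\alpha^{*}$ reduce, after subtracting the common term $n-k$ from each side, to exactly the original \eqref{eq:a1}, \eqref{eq:a2}, \eqref{eq:a3} for $\alpha$ covering $\alpha'$.

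The only condition needing (i) is \eqref{eq:a4}. Applying (i) to both compositions, together with the equality $c_{i,j}(\alpha)=c_{i,j}(\alpha')$ furnished by \eqref{eq:a4} for $\alpha$, yields $c_{i,j}(\alpha^{*})=c_{i,j}((\alpha')^{*})$. The remaining value equation $c_{i,j}(\alpha^{*})=\alpha^{*}_{i}-(\alpha')^{*}_{j}$ follows by expansion: by (i) and \eqref{eq:a4} the left side equals $j-i-1-(\alpha'_{i}-\alpha_{j})$, while the right side, after inserting the definition of the complement and then substituting \eqref{eq:a2}, collapses to the same quantity $j-i-1-\alpha'_{i}+\alpha_{j}$. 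The main obstacle I anticipate is purely organizational rather than analytic: complementation reverses the covering direction, so one must carefully track which composition plays the role of the covering (larger-weight) element and align the asymmetric conditions \eqref{eq:a1}, \eqref{eq:a2}, \eqref{eq:a4} accordingly, but no genuinely new inequality is required beyond part (i).
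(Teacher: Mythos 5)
Your proposal is correct and takes essentially the same route as the paper: part (i) by the same induction on $j$, observing that the increment condition for $c_{i,\cdot}(\alpha^{*})$ is (by integrality) exactly the complement of that for $c_{i,\cdot}(\alpha)$, and part (ii) by checking that $(\alpha')^{*}$ covers $\alpha^{*}$ in the same positions $(i,j)$ via conditions \eqref{eq:a1}--\eqref{eq:a4}. The paper states part (ii) is ``easily obtained'' from the definition of $\alpha^{*}$ and (i) without writing out the verification; your expansion of that verification (including the weight bookkeeping and the computation showing $c_{i,j}(\alpha^{*})=\alpha^{*}_{i}-(\alpha')^{*}_{j}$) is precisely the intended argument.
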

\begin{proof}
We will prove (i) by induction on $j$. If $j = i + 1$ then $c_{i,i+1}(\alpha^{*})=0 = j - i - c_{i,j}(\alpha)$.

Suppose that the equation holds for $j$. There are two cases to analyze:
If $\alpha_{j}< \alpha_{i} - c_{i,j}(\alpha)$ then $\alpha^{*}_{j} = n-j-\alpha_{j} > n-j-\alpha_{i}+c_{i,j}(\alpha)=\alpha_{i}^{*}+i-j+c_{i,j}(\alpha) = \alpha_{i}^{*} - c_{i,j}(\alpha^{*}) - 1$, i.e., $\alpha^{*}_{j}\geqslant \alpha_{i}^{*} - c_{i,j}(\alpha^{*})$. By definition of $c_{i,j}$, we have that 
\begin{equation*}
c_{i,j+1}(\alpha^{*}) = c_{i,j}(\alpha^{*}) = j - i - 1 - c_{i,j}(\alpha) = j - i-c_{i,j+1}(\alpha).
\end{equation*}

If $\alpha_{j}\geqslant \alpha_{i} - c_{i,j}(\alpha)$ then $\alpha^{*}_{j} = n-j-\alpha_{j} \leqslant n-j-\alpha_{i}+c_{i,j}(\alpha)=\alpha_{i}^{*}+i-j+c_{i,j}(\alpha) = \alpha_{i}^{*} - c_{i,j}(\alpha^{*}) - 1$, i.e., $\alpha^{*}_{j}< \alpha_{i}^{*} - c_{i,j}(\alpha^{*})$. By definition of $c_{i,j}$, we have that 
\begin{equation*}
c_{i,j+1}(\alpha^{*}) = c_{i,j}(\alpha^{*}) +1= j - i - c_{i,j}(\alpha) = j - i - c_{i,j+1}(\alpha).
\end{equation*}

This proves assertion (i).

Assertion (ii) is easily obtained once we show conditions \eqref{eq:a1} to \eqref{eq:a4} using the definition of $\alpha^{*}$ and (i).
\end{proof}

The next lemma states the relationship between removable and insertable compositions.

\begin{lem}\label{lem:insertiondual}
$\alpha$ is $(i,z)$-insertable if, and only if, $\alpha^{*}$ is $(i,z)$-removable.
\end{lem}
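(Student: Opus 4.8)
The plan is to exploit the fact that the star operation $\alpha \mapsto \alpha^{*}$ is an involution on $\mathcal{C}_{n}$ and that, by Lemma \ref{lem:coverdual}(ii), it reverses the direction of every covering relation. So the entire statement should fall out by transporting a witnessing composition across the duality. First I would record two elementary facts about the star operation. It is an involution: $(\alpha^{*})^{*}_{i} = n-i-\alpha^{*}_{i} = n-i-(n-i-\alpha_{i}) = \alpha_{i}$, so $(\alpha^{*})^{*}=\alpha$. And it converts an additive shift in the $i$-th coordinate into the opposite shift: if $\beta_{i}=\alpha_{i}\pm z$ then $\beta^{*}_{i}=n-i-\beta_{i}=(n-i-\alpha_{i})\mp z=\alpha^{*}_{i}\mp z$.

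For the forward implication, I would suppose $\alpha$ is $(i,z)$-insertable, witnessed by a composition $\alpha''\in\mathcal{C}_{n}$ with $\alpha''_{i}=\alpha_{i}+z$ such that $\alpha''$ covers $\alpha$. Applying Lemma \ref{lem:coverdual}(ii) to this covering pair yields that $\alpha^{*}$ covers $(\alpha'')^{*}$. By the shift computation above, $(\alpha'')^{*}_{i}=\alpha^{*}_{i}-z$, so $(\alpha'')^{*}$ realizes $\alpha^{*}$ as $(i,z)$-removable.

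For the converse, I would suppose $\alpha^{*}$ is $(i,z)$-removable, witnessed by $\gamma\in\mathcal{C}_{n}$ with $\gamma_{i}=\alpha^{*}_{i}-z$ such that $\alpha^{*}$ covers $\gamma$. Applying Lemma \ref{lem:coverdual}(ii) to this pair gives that $(\alpha^{*})^{*}=\alpha$ is covered by $\gamma^{*}$. Again by the shift computation, $\gamma^{*}_{i}=\alpha_{i}+z$, so $\gamma^{*}$ realizes $\alpha$ as $(i,z)$-insertable. The admissible ranges of $z$ match automatically, since $[\alpha^{*}_{i}]=[n-i-\alpha_{i}]$ is exactly the range appearing in the definition of $(i,z)$-insertability.

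I do not expect a genuine obstacle: all of the substance is already contained in Lemma \ref{lem:coverdual}, and what remains is the bookkeeping of verifying that star is an involution and reverses the sign of the coordinate shift while exchanging ``covers'' with ``is covered by''. The only point requiring a moment's care is keeping track of the direction of the covering relation when invoking the duality lemma, so that an insertion above $\alpha$ becomes a removal below $\alpha^{*}$ and vice versa.
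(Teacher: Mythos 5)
Your proof is correct and follows essentially the same route as the paper: both directions are obtained by applying Lemma \ref{lem:coverdual}(ii) to the witnessing covering pair and tracking the coordinate shift $(\alpha'')^{*}_{i}=n-i-\alpha''_{i}=\alpha^{*}_{i}-z$. The paper dispatches the converse with ``the reciprocal is analogous,'' which is exactly the involution argument you spell out explicitly.
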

\begin{proof}
Suppose that $\alpha$ is $(i,z)$-insertable. Then, there exists $\alpha''$ such that $\alpha$ is covered by $\alpha''$ and $\alpha''_{i}=\alpha_{i}+z$. By Lemma \ref{lem:coverdual}, $\alpha^{*}$ covers $(\alpha'')^{*}$ and $(\alpha'')_i^{*} = n - i -\alpha_{i}''=n-i-\alpha_{i}-z=\alpha^{*}_{i}-z$. The reciprocal is analogous.
\end{proof}

We now seek a criteria for a composition $\alpha$ to be $(i,z)$-insertable. A first approach is to apply the \ref{lem:insertiondual} by which it is equivalent to have $\alpha^{\ast}$ be $(i,z)$-removable.

Instead, we will define a composition $\widehat{\alpha}$ to play an analogous role to $\widetilde{\alpha}$ in the removing process. Set the composition $\widehat{\alpha}$ as $\widehat\alpha_{i}=\alpha_{i}+z$ and $\widehat\alpha_{m}=\alpha_{m}$ for $m\neq i$.
Denote by $\widehat{J}_{\alpha}(i,z)$ the greatest $j>i$ such that $c_{i,j}(\alpha) = c_{i,j}(\widehat{\alpha})$, i.e., 
\begin{equation}\label{eq:hatJ}
\widehat J_{\alpha}(i,z)=\max\{j>i \colon c_{i,j}(\alpha) = c_{i,j}(\widehat{\alpha})\}.
\end{equation}
If it is clear, we will denote $\widehat{J}(i,z)=\widehat{J}_{\alpha}(i,z)$. 

The next lemma relates $\tilde\alpha$ and $\widehat{\alpha}$. In particular, we will see that the corresponding $J$'s for the process of either removing from $\alpha^{\ast}$ or insertion in $\alpha$ are equivalent.

\begin{lem}\label{lem:equaljs} Let $\alpha \in \mathcal{C}_n$, $i\in [n-1]$ and $z\in [n-i-\alpha_{i}]$.
\begin{enumerate}
\item[(i)] $\tilde{\alpha^{*}} = (\widehat{\alpha})^{*}$;
\item[(ii)] $\widehat{J}_{\alpha}(i,z) = \tilde{J}_{\alpha^{*}}(i,z)$.
\end{enumerate}
\end{lem}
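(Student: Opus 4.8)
The plan is to settle part (i) by a one-line comparison of parts, and then to use it as a bridge so that part (ii) becomes a direct transport of the defining condition of $\widehat{J}$ through the duality formula of Lemma \ref{lem:coverdual}(i). The whole argument is elementary; essentially the only thing requiring attention is keeping the index $j$ inside the range where that duality formula is available.

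For (i) I would simply unfold both compositions. Because $z\in[n-i-\alpha_{i}]$, the $i$-th part $n-i-\alpha_{i}-z$ is nonnegative, so both sides are well-defined elements of $\mathcal{C}_{n}$. Evaluating the $i$-th part gives $(\tilde{\alpha^{*}})_{i}=(\alpha^{*})_{i}-z=n-i-\alpha_{i}-z$ and $((\widehat{\alpha})^{*})_{i}=n-i-\widehat{\alpha}_{i}=n-i-\alpha_{i}-z$, while for every $m\neq i$ both parts equal $n-m-\alpha_{m}$. Hence the two compositions agree entrywise, proving (i).

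For (ii) I would first rewrite the target using (i) as $\tilde{J}_{\alpha^{*}}(i,z)=\max\{j>i \colon c_{i,j}(\alpha^{*})=c_{i,j}((\widehat{\alpha})^{*})\}$. Applying Lemma \ref{lem:coverdual}(i) to both $\alpha$ and $\widehat{\alpha}$, which both lie in $\mathcal{C}_{n}$ so the formula is valid for $i<j\leqslant n+1$, yields $c_{i,j}(\alpha^{*})-c_{i,j}((\widehat{\alpha})^{*})=c_{i,j}(\widehat{\alpha})-c_{i,j}(\alpha)$ for every such $j$. Therefore, within $i<j\leqslant n+1$, the condition $c_{i,j}(\alpha^{*})=c_{i,j}((\widehat{\alpha})^{*})$ is equivalent to $c_{i,j}(\alpha)=c_{i,j}(\widehat{\alpha})$, so the defining sets of $\tilde{J}_{\alpha^{*}}(i,z)$ and $\widehat{J}_{\alpha}(i,z)$ coincide on that window.

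The one point that needs care, and which I expect to be the main (indeed the only) obstacle, is that the duality formula holds only for $j\leqslant n+1$, so I must confine both maxima to that window. Since $\widehat{\alpha}_{i}=\alpha_{i}+z$ with $z\geqslant 1$ and $N(\alpha),N(\widehat{\alpha})\leqslant n$, Lemma \ref{lem:c_properties}(iii) gives $c_{i,j}(\alpha)=\alpha_{i}\neq\alpha_{i}+z=c_{i,j}(\widehat{\alpha})$ for all $j\geqslant n+1$, so no such $j$ lies in the defining set of $\widehat{J}_{\alpha}(i,z)$. The analogous stabilization for $\alpha^{*}$ and $(\widehat{\alpha})^{*}$, both of which are in $\mathcal{C}_{n}$, shows $c_{i,j}(\alpha^{*})=n-i-\alpha_{i}\neq n-i-\alpha_{i}-z=c_{i,j}((\widehat{\alpha})^{*})$ for $j\geqslant n+1$, so no such $j$ lies in the defining set of $\tilde{J}_{\alpha^{*}}(i,z)$ either. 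Thus both maxima are attained in the range $i<j\leqslant n$, where the two membership conditions are equivalent, and I conclude $\widehat{J}_{\alpha}(i,z)=\tilde{J}_{\alpha^{*}}(i,z)$.
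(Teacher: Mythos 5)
Your proof is correct and follows essentially the same route as the paper's: part (i) by direct entrywise comparison, and part (ii) by combining (i) with the duality formula of Lemma \ref{lem:coverdual}(i) to show the defining sets of $\widehat{J}_{\alpha}(i,z)$ and $\tilde{J}_{\alpha^{*}}(i,z)$ coincide. The only difference is your extra verification (via Lemma \ref{lem:c_properties}(iii)) that both maxima are attained inside the window $i<j\leqslant n+1$ where the duality formula is valid --- a point the paper's proof passes over silently, so your version is slightly more careful but not a different argument.
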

\begin{proof}
(i): If follows directly by the definitions: $(\widetilde{{\alpha}^{\ast}})_i=\alpha_i^{\ast}-z=n-i-\alpha_i-z$ and $(\widehat{\alpha})^{\ast}_{i}=n-i-(\widehat{\alpha}_i)=n-i-\alpha_i-z$. For $m\neq i$, $(\widetilde{{\alpha}^{\ast}})_m=\alpha_m^{\ast}=n-i-\alpha_m$ and $(\widehat{\alpha})^{\ast}_m=n-i-\widehat{\alpha}_m=n-i-\alpha_m$.

(ii): By item (i), notice that $c_{i,j}(\alpha^{\ast})=c_{i,j}(\widetilde{\alpha^{\ast}})=c_{i,j}((\widehat{\alpha})^{*})$. Hence, by Lemma \ref{lem:coverdual}(i),  $c_{i,j}(\alpha^{\ast})=c_{i,j}(\widetilde{\alpha^{\ast}})$ if, and only if,  $c_{i,j}(\alpha)=c_{i,j}(\widehat{\alpha})$. 
\end{proof}

\begin{lem}\label{lem:insertion_uniquely}
If $\alpha$ is $(i,z)$-insertable then there exists a unique composition $\alpha''$ such that $\alpha$ is covered by $\alpha''$ and $\alpha_{i}''=\alpha_{i}+z$. In this case, $\alpha$ is covered by the composition $\alpha''$ defined by $\alpha''_{i} = \alpha_{i}+z$, $\alpha''_{\widehat{J}(i,z)}=\alpha_{\widehat{J}(i,z)}-z+1$, and $\alpha''_{m}=\alpha_{m}$ for $m\neq i \mbox{ or } \widehat{J}(i,z)$.
\end{lem}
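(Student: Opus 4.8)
The plan is to derive this statement from its removing counterpart, Lemma \ref{lem:remove_uniquely}, via the duality encoded by the star operation $\alpha \mapsto \alpha^{*}$. First I would record the involutive identity $(\alpha^{*})^{*} = \alpha$, which is immediate from $n - i - (n-i-\alpha_{i}) = \alpha_{i}$. Since $\alpha$ is $(i,z)$-insertable, Lemma \ref{lem:insertiondual} tells us that $\alpha^{*}$ is $(i,z)$-removable. Applying Lemma \ref{lem:remove_uniquely} to $\alpha^{*}$ then yields a unique composition $\beta$ which is covered by $\alpha^{*}$ with $\beta_{i} = \alpha^{*}_{i} - z$; moreover $\beta$ has the explicit form $\beta_{i} = \alpha^{*}_{i}-z$, $\beta_{\tilde{J}_{\alpha^{*}}(i,z)} = \alpha^{*}_{\tilde{J}_{\alpha^{*}}(i,z)} + z - 1$, and $\beta_{m} = \alpha^{*}_{m}$ otherwise. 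I would then set $\alpha'' := \beta^{*}$ and claim it is the desired composition.

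To verify that $\alpha$ is covered by $\alpha''$, I would apply Lemma \ref{lem:coverdual}(ii) to the covering ``$\alpha^{*}$ covers $\beta$'': this gives that $(\alpha^{*})^{*} = \alpha$ is covered by $\beta^{*} = \alpha''$. For the explicit entries, I would use the definition of the star operation together with Lemma \ref{lem:equaljs}(ii), which identifies $\widehat{J}(i,z) = \tilde{J}_{\alpha^{*}}(i,z)$. Writing $J = \widehat{J}(i,z)$ and computing $\alpha''_{m} = n - m - \beta_{m}$ entrywise gives $\alpha''_{i} = n-i-(\alpha^{*}_{i}-z) = \alpha_{i}+z$, then $\alpha''_{J} = n - J - (\alpha^{*}_{J}+z-1) = \alpha_{J} - z + 1$, and $\alpha''_{m} = n-m-\alpha^{*}_{m} = \alpha_{m}$ for $m \neq i, J$, which matches the stated formula.

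Finally, for uniqueness suppose $\gamma$ is any composition covering $\alpha$ with $\gamma_{i} = \alpha_{i}+z$. By Lemma \ref{lem:coverdual}(ii) applied to ``$\gamma$ covers $\alpha$'', the composition $\gamma^{*}$ is covered by $\alpha^{*}$, and $(\gamma^{*})_{i} = n-i-\gamma_{i} = \alpha^{*}_{i} - z$. The uniqueness clause of Lemma \ref{lem:remove_uniquely} then forces $\gamma^{*} = \beta$, whence $\gamma = (\gamma^{*})^{*} = \beta^{*} = \alpha''$. I do not expect any genuine obstacle in this argument; the only points requiring care are tracking that the star map \emph{reverses} the direction of the covering relation and confirming that the relevant index $J$ agrees on both sides of the duality, which is exactly the content of Lemma \ref{lem:equaljs}(ii).
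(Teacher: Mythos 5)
Your proposal is correct and follows essentially the same route as the paper's own proof: dualize via $\alpha\mapsto\alpha^{*}$, invoke Lemma \ref{lem:insertiondual} and Lemma \ref{lem:remove_uniquely} on $\alpha^{*}$, set $\alpha''=(\alpha')^{*}$, and identify the indices through Lemma \ref{lem:equaljs}(ii). Your writeup is in fact slightly more complete than the paper's, since you explicitly carry out the uniqueness step (dualizing an arbitrary competitor $\gamma$ and using the uniqueness clause of Lemma \ref{lem:remove_uniquely}), which the paper leaves implicit.
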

\begin{proof}
By Lemma \ref{lem:insertiondual}, $\alpha^{*}$ is $(i,z)$-removable. Then, it follows from Lemma \ref{lem:remove_uniquely} that the unique $\alpha'$ such that $\alpha^{*}$ covers $\alpha'$ is defined by $\alpha'_{i}=\alpha_{i}^{*}-z$, $\alpha'_{\tilde{J}_{\alpha^{*}}(i,z)} = \alpha_{\tilde{J}_{\alpha^{*}}(i,z)}+z-1$ and $\alpha'_{m}=\alpha^{*}$ for $m\neq i$ or $\tilde{J}_{\alpha^{*}}(i,z)$. Put $\alpha''=(\alpha')^{*}$ and notice that, by Lemma \ref{lem:equaljs}, $\tilde{J}_{\alpha^{*}}(i,z)=\widehat{J}_{\alpha}(i,z)$.
\end{proof}

Since $\alpha''$ in Lemma \ref{lem:insertion_uniquely} is unique, we will call $\alpha''$ the $(i,z)$-insertion of $\alpha$.

The next lemma shows that we can also lower the upper bound of $z$ in the insertion process.

\begin{prop}\label{prop:insertion_condition}
Let $\alpha\in \mathcal{C}_{n}$, $i\in [n-1]$, and $z\in [n-i-\alpha_{i}]$. We have the following:
\begin{enumerate}
\item[(i)] $\alpha$ is $(i,z)$-insertable if, and only if, $c_{i,\widehat{J}(i,z)}(\alpha) = \alpha_{i}-\alpha_{\widehat{J}(i,z)}+z-1$;
\item[(ii)] If $\alpha$ is $(i,z)$-insertable then $z\leqslant \alpha_{k_{\alpha^{*}}(i)}-\alpha_{i}+k_{\alpha^{*}}(i)-i$;
\item[(iii)] If $\alpha_{i}+i<n$ then $\alpha$ is always $(i,1)$-insertable.
\end{enumerate}
\end{prop}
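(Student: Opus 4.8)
The plan is to derive all three assertions from the corresponding removal statements already proved in Proposition~\ref{prop:remove_condition}, exploiting the duality furnished by the involution $\alpha\mapsto\alpha^{*}$. The governing principle is Lemma~\ref{lem:insertiondual}: $\alpha$ is $(i,z)$-insertable if and only if $\alpha^{*}$ is $(i,z)$-removable. Once this translation is in place, each part reduces to a routine substitution using the identities $\alpha^{*}_{m}=n-m-\alpha_{m}$, the formula $c_{i,j}(\alpha^{*})=j-i-1-c_{i,j}(\alpha)$ of Lemma~\ref{lem:coverdual}(i), and the equality of indices $\widehat{J}_{\alpha}(i,z)=\tilde{J}_{\alpha^{*}}(i,z)$ of Lemma~\ref{lem:equaljs}(ii).

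For (i), I would set $j=\widehat{J}(i,z)=\tilde{J}_{\alpha^{*}}(i,z)$ and apply Proposition~\ref{prop:remove_condition}(i) to $\alpha^{*}$: the composition $\alpha$ is $(i,z)$-insertable precisely when $c_{i,j}(\alpha^{*})=\alpha^{*}_{i}-\alpha^{*}_{j}-z$. Substituting $c_{i,j}(\alpha^{*})=j-i-1-c_{i,j}(\alpha)$ together with $\alpha^{*}_{i}=n-i-\alpha_{i}$ and $\alpha^{*}_{j}=n-j-\alpha_{j}$, the terms $n$ and $j-i$ cancel and the condition collapses to $c_{i,j}(\alpha)=\alpha_{i}-\alpha_{j}+z-1$, which is exactly the stated equivalence.

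For (ii), Proposition~\ref{prop:remove_condition}(ii) applied to $\alpha^{*}$ gives $z\leqslant\alpha^{*}_{i}-\alpha^{*}_{k_{\alpha^{*}}(i)}$; expanding both starred entries via $\alpha^{*}_{m}=n-m-\alpha_{m}$ and simplifying yields $z\leqslant\alpha_{k_{\alpha^{*}}(i)}-\alpha_{i}+k_{\alpha^{*}}(i)-i$, as claimed. For (iii), observe that $\alpha^{*}_{i}=n-i-\alpha_{i}$, so the hypothesis $\alpha_{i}+i<n$ is equivalent to $\alpha^{*}_{i}>0$; Proposition~\ref{prop:remove_condition}(iii) then guarantees that $\alpha^{*}$ is $(i,1)$-removable, whence $\alpha$ is $(i,1)$-insertable by Lemma~\ref{lem:insertiondual}.

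The computations are entirely mechanical, so the only point deserving care is the applicability of Lemma~\ref{lem:coverdual}(i) at $j=\widehat{J}(i,z)$, which requires $j\leqslant n+1$. Since $z\leqslant n-i-\alpha_{i}$ forces $\widehat{\alpha}\in\mathcal{C}_{n}$, both $\alpha^{*}$ and $\tilde{\alpha^{*}}$ lie in $\mathcal{C}_{n}$ with $N(\cdot)\leqslant n$; by Lemma~\ref{lem:c_properties}(iii) their $c$-columns stabilize by index $n+1$, and since $\tilde{\alpha^{*}}_{i}<\alpha^{*}_{i}$ they must disagree from some column onward, forcing $\widehat{J}(i,z)=\tilde{J}_{\alpha^{*}}(i,z)\leqslant n$. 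Thus the index stays in range and no genuine obstacle arises: the proposition is simply the mirror image of Proposition~\ref{prop:remove_condition} under starring.
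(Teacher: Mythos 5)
Your proposal is correct and follows essentially the same route as the paper's proof: both reduce the statement to Proposition~\ref{prop:remove_condition} applied to $\alpha^{*}$ via Lemma~\ref{lem:insertiondual}, then translate back using Lemma~\ref{lem:coverdual}(i) and Lemma~\ref{lem:equaljs}(ii), with identical substitutions in all three parts. Your added remark verifying that $\widehat{J}(i,z)$ stays within the range where Lemma~\ref{lem:coverdual}(i) applies is a small extra care point the paper leaves implicit, but it does not change the argument.
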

\begin{proof}
(i): By Lemma \ref{lem:insertiondual} and Proposition \ref{prop:remove_condition}, $\alpha^{*}$ is $(i,z)$-removable if, and only if, $c_{i,\tilde{J}_{\alpha^{*}}(i,z)}(\alpha^{*}) = \alpha_{i}^{*}-\alpha^{*}_{\tilde{J}_{\alpha^{*}}(i,z)}-z$. By Lemmas \ref{lem:coverdual} and \ref{lem:equaljs}, the latter equation is equivalent to $c_{i,\widehat{J}(i,z)}(\alpha) = \alpha_{i}-\alpha_{\widehat{J}(i,z)}+z-1$.

(ii): By Lemma \ref{lem:insertiondual}, $\alpha^{*}$ is $(i,z)$-removable. Since $k_{\alpha^{*}}(i) = \min\{k>i \colon \alpha_{k}^{*}<\alpha_{i}^{*}\}$ then $z\leqslant \alpha_{i}^{*}-\alpha_{k_{\alpha^{*}}(i)}^{*} = n-i-\alpha_{i}-(n-k_{\alpha^{*}}(i)-\alpha_{k_{\alpha^{*}}(i)})=\alpha_{k_{\alpha^{*}}(i)}-\alpha_{i}+k_{\alpha^{*}}(i)-i$.

(iii): Under this condition we have that $\alpha^{*}_{i} > 0$. Then, $\alpha^{*}$ is $(i,1)$-removable, i.e., $\alpha$ is $(i,1)$-insertable.
\end{proof}

\begin{prop}\label{prop:compare_insertion}
Let $\alpha$ be $(i,z_{1})$-insertable and also $(i,z_{2})$-insertable. Then,
\begin{enumerate}
\item[(i)] $z_{1}=z_{2}$ if, and only if, $\widehat{J}(i,z_{1})=\widehat{J}(i,z_{2})$;
\item[(ii)] $z_{1}>z_{2}$ if, and only if, $\widehat{J}(i,z_{1})<\widehat{J}(i,z_{2})$.
\end{enumerate}
\end{prop}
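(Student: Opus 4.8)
The plan is to reduce the statement entirely to the already-proved comparison result for removings, Proposition \ref{prop:compare_remov}, by transporting the hypotheses through the duality $\alpha \mapsto \alpha^{*}$. This is the same strategy used to derive Lemmas \ref{lem:insertion_uniquely} and Proposition \ref{prop:insertion_condition} from their removable counterparts, so it should go through cleanly.

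First I would observe that, since $\alpha$ is both $(i,z_{1})$-insertable and $(i,z_{2})$-insertable, Lemma \ref{lem:insertiondual} tells us that $\alpha^{*}$ is both $(i,z_{1})$-removable and $(i,z_{2})$-removable. Thus $\alpha^{*}$ satisfies the hypotheses of Proposition \ref{prop:compare_remov}, which yields the two equivalences $z_{1}=z_{2} \iff \tilde{J}_{\alpha^{*}}(i,z_{1})=\tilde{J}_{\alpha^{*}}(i,z_{2})$ and $z_{1}>z_{2} \iff \tilde{J}_{\alpha^{*}}(i,z_{1})<\tilde{J}_{\alpha^{*}}(i,z_{2})$.

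The final step is to replace the removing indices on $\alpha^{*}$ by the insertion indices on $\alpha$. By Lemma \ref{lem:equaljs}(ii) we have $\widehat{J}_{\alpha}(i,z)=\tilde{J}_{\alpha^{*}}(i,z)$ for each admissible $z$, so substituting $z=z_{1}$ and $z=z_{2}$ converts the two equivalences above verbatim into assertions (i) and (ii) of the proposition. I do not expect any genuine obstacle: all the substantive work — that insertability on $\alpha$ corresponds to removability on $\alpha^{*}$ for the same $z$, and that the associated $J$-indices coincide under $*$ — has already been front-loaded into Lemmas \ref{lem:insertiondual} and \ref{lem:equaljs}, so the argument is essentially a direct translation. (Alternatively, one could mirror the proof of Proposition \ref{prop:compare_remov} using $\widehat{\alpha}$ in place of $\tilde{\alpha}$ together with the monotonicity from Lemma \ref{lem:compare_comp}, but the dualization route is shorter and reuses existing results.)
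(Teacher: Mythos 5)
Your proposal is correct and is precisely the paper's own argument: the published proof reads ``It follows directly by Lemmas \ref{lem:insertiondual} and \ref{lem:equaljs} and by Proposition \ref{prop:compare_remov}'', which is exactly your chain of transporting insertability to removability of $\alpha^{*}$, applying the removing comparison there, and identifying the $J$-indices via $\widehat{J}_{\alpha}(i,z)=\tilde{J}_{\alpha^{*}}(i,z)$. No gaps; your write-up just spells out the details the paper leaves implicit.
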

\begin{proof}
It follows directly by Lemmas \ref{lem:insertiondual} and \ref{lem:equaljs} and by Proposition \ref{prop:compare_remov}.
\end{proof}

Summarizing, given a composition $\alpha \in \mathcal{C}_n$ and an index $i\in [n-1]$, we can determine if $\alpha$ is $(i,z)$-insertable by following the steps below:

\begin{enumerate}
\item Start with $\alpha^{\ast}$; 
\item Compute the possible values of $z$ by the range $[1,\alpha_{k_{\alpha^{*}}(i)}-\alpha_{i}+k_{\alpha^{*}}(i)-i]$ (cf. Equation \ref{eq:kalphai}); 
\item If $\alpha_{i}+i<n$ then $\alpha$ is $(i,1)$-insertable;
\item For each $2\leq z\leq \alpha_{k_{\alpha^{*}}(i)}-\alpha_{i}+k_{\alpha^{*}}(i)-i$, proceed as follows: 
\begin{enumerate}
\item Start with $\widehat{\alpha}$; 
\item Compute $c_{i,j}(\widehat{\alpha})$ and $\widehat{J}(i,z)$ (cf. Equation \ref{eq:hatJ});
\item If $c_{i,\widehat{J}(i,z)}(\alpha) = \alpha_{i}-\alpha_{\widehat{J}(i,z)}+z-1$, then $\alpha$ is $(i,z)$-insertable. Define $\alpha''$ by:
\begin{equation*}
\alpha''_{i} = \alpha_{i}+z \,,\, \alpha''_{\widehat{J}(i,z)}=\alpha_{\widehat{J}(i,z)}-z+1, \mbox{ and }\alpha''_{m}=\alpha_{m}, \mbox{ for }m\neq i \mbox{ or } \widehat{J}(i,z),
\end{equation*}
as stated in Lemma \ref{lem:insertion_uniquely}.
\end{enumerate}
\end{enumerate}

For instance, let $\alpha=(2,5,4,1,0,2,1) \in \mathcal{C}_8$. Let us determine when $\alpha$ is $(1,z)$-insertable, with $z\in[5]$. Notice that $\alpha^{\ast}=(5,1,1,3,3,0,0)$ and $k_{\alpha^{\ast}}(1)=\min\{k>1\colon \alpha^{\ast}_k <5\}=2$. Hence, $z\leq \alpha_2-\alpha_1+k_{\alpha^{\ast}}(1)-1=4$, i.e., $\alpha$ is not $(1,5)$-insertable. Table \ref{tab:insertions} presents the $(1,z)$-insertions $\alpha''$ of $\alpha$, for $z=1,\ldots,4$. We remember that $c_{1,j}(\alpha)=(0,0,0,0,1,2,2,2,2)$. In each case, we have that $c_{i,\widehat{J}(i,z)}(\alpha) = \alpha_{i}-\alpha_{\widehat{J}(i,z)}+z-1$.

\begin{table}[h!]
\centering
\renewcommand{\arraystretch}{1.4}
\begin{tabular}{>{\centering\arraybackslash}p{1.0cm}>{\centering\arraybackslash}p{3.6cm}>{\centering\arraybackslash}p{1.2cm}>{\centering\arraybackslash}p{1.2cm}>{\centering\arraybackslash}p{3.2cm}}
\toprule
${z}$ & ${c_{1,j}(\widehat{\alpha})}$ & ${\widehat{J}(1,z)}$ & ${\alpha''_{\widehat{J}(1,z)}}$ & ${\alpha''}$ \\
\midrule
$1$ & $(\mathbf{0,0,0,0,1,2,2,2},3)$ & $8$ & $-$ & $(\mathbf{3},5,4,1,0,2,1)$ \\

$2$ & $(\mathbf{0,0,0,0,1,2,2},3,4)$ & $7$ & $0$ & $(\mathbf{4},5,4,1,0,2,\mathbf{0})$ \\

$3$ & $(\mathbf{0,0,0},1,2,3,3,4,5)$ & $3$ & $2$ & $(\mathbf{5},5,\mathbf{2},1,0,2,1)$ \\

$4$ & $(\mathbf{0,0},1,2,3,4,4,5,6)$ & $2$ & $2$ & $(\mathbf{6,2},4,1,0,2,1)$ \\
\bottomrule
\end{tabular}
\caption{The $(1,z)$-insertions of $\alpha=(2,5,4,1,0,2,1)$.}
\label{tab:insertions}
\end{table}

Figure~\ref{fig:alpha_insertions} shows this process using diagrams. Eventually, after a sequence of inverse of ladder moves, illustrated by the polygonal paths, one produces the covering diagrams formed by inserting a box in the first row, marked by a diamond-shaped box.

\begin{figure}[ht]
\centering
\includegraphics[scale=0.75]{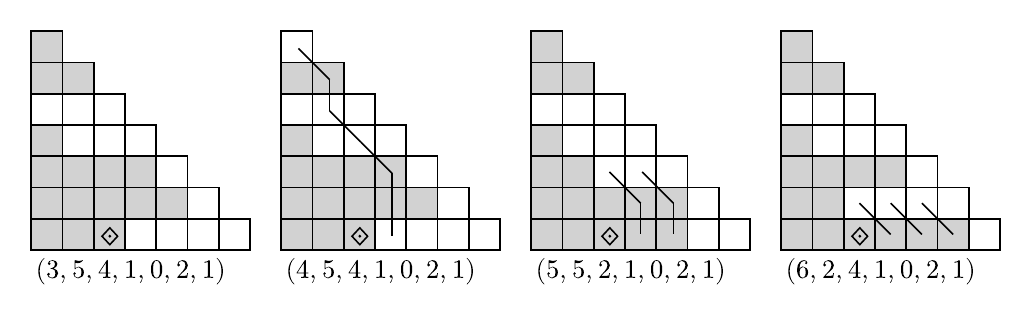}
\caption{The diagrams of the $(1,z)$-insertions of $\alpha=(2,5,4,1,0,2,1)$.}
\label{fig:alpha_insertions}
\end{figure}

\subsection{Monk's rule}\label{subsec:monksrule}

Coşkun-Taşkın (see \cite{CT18}, Theorem 4.1) obtained a result analogous to Theorem \ref{thm:codecovering2} (or Proposition \ref{prop:laddermoves}) using  the language of Tower diagrams constructed from the inverse permutation Lehmer code. As an application, their approach yields an algorithm that describes Monk's rule, which in turn can be used to derive an algorithm for Pieri's rule.

Similarly, Theorem \ref{thm:codecovering2} provides an insertion process on compositions that leads to an algorithm for computing  Monk's Rule for permutations (see Lascoux-Schützenberger \cite{LS82} for the Schubert polynomials theory). Specifically, for a permutation $w\in S_{\infty}$, let $\mathfrak{S}_{w}$ denote the corresponding Schubert polynomial. Then:
\begin{equation*}
\mathfrak{S}_{s_{r}} \mathfrak{S}_{w} = \sum_{\substack{i\leqslant r <j \\ \ell(w\cdot(i,j))=\ell(w)+1}} \mathfrak{S}_{w\cdot(i,j)}.
\end{equation*}

Define $\mathcal{C}_{\infty} = \bigcup \mathcal{C}_{n}$. In the context of compositions, given $\alpha\in\mathcal{C}_{\infty}$ and $r>0$, we need to find all $(i,z)$ such that $\alpha$ is $(i,z)$-insertable and $i\leqslant r < \widehat{J}(i,z)$. Note that Proposition \ref{prop:insertion_condition}(i) provides an explicit criterion to determine whether $\alpha$ is insertable, while Propositions \ref{prop:insertion_condition}(ii) and \ref{prop:compare_insertion} further restrict the possible values of $z$.

For instance, let $w=37621854 \in S_8$ for which we have $\code(w)=\alpha=(2,5,4,1,0,2,1)$. Let us compute the Monk's Rule for $r=4$. Table \ref{tab:monksrule} displays the possible possible insertions $(i,z)$ with $i\leq 4$, along with the corresponding values of $\widehat{J}$. Only those satisfying $\widehat{J}\geq 5$ contribute to the result.  

\begin{table}[h!]
\centering
\renewcommand{\arraystretch}{1.4}
\begin{tabular}{>{\centering\arraybackslash}p{0.6cm}>{\centering\arraybackslash}p{3.6cm}>{\centering\arraybackslash}p{0.6cm}>{\centering\arraybackslash}p{3.6cm}>{\centering\arraybackslash}p{0.6cm}>{\centering\arraybackslash}p{3.2cm}}
\toprule
$i$ & $c_{i,j}(\alpha)$ & $z$ & $c_{i,j}(\widehat{\alpha})$ & $\widehat{J}$ & $\alpha''$ \\
\midrule
\multirow{4}{*}{$1$} & \multirow{4}{*}{$(0,0,0,0,1,2,2,2,3)$} & $1$ & $(0,0,0,0,1,2,2,2,3)$ & $8$ & $(3,5,4,1,0,2,1)$\\

    &     & $2$ & $(0,0,0,0,1,2,2,3,4)$ & $7$ & $(4,5,4,1,0,2,0)$\\

    &     & $3$ & $(0,0,0,1,2,3,3,4,5)$ & $3$ & --\\

    &     & $4$ & $(0,0,1,2,3,4,4,5,6)$ & $2$ & --\\
\hline
$2$ & $(0,0,0,1,2,3,3,4,5)$ & $1$ & $(0,0,0,1,2,3,4,5,6)$ & $6$ & $(2,6,4,1,0,2,1)$ \\
\hline
$3$ & $(0,0,0,0,1,2,2,3,4)$ & $1$ & $(0,0,0,0,1,2,3,4,5)$ & $6$ & $(2,5,5,1,0,2,1)$\\
\hline
\multirow{3}{*}{$4$} & \multirow{3}{*}{$(0,0,0,0,0,1,1,1,1)$} & $1$ & $(0,0,0,0,0,1,1,1,2)$ & $8$ & $(2,5,4,2,0,2,1)$\\

    &     & $2$ & $(0,0,0,0,0,1,1,2,3)$ & $7$ & $(2,5,4,3,0,2,0)$ \\

    &     & $3$ & $(0,0,0,0,0,1,2,3,4)$ & $6$ & $(2,5,4,4,0,0,1)$\\
\bottomrule
\end{tabular}
\caption{The Monk's Rule for $\alpha=(2,5,4,1,0,2,1)$ and $r=4$.}
\label{tab:monksrule}
\end{table}

Hence,
\begin{align*}
\mathfrak{S}_{s_{4}} \mathfrak{S}_{(2,5,4,1,0,2,1)} &= \mathfrak{S}_{(3,5,4,1,0,2,1)}+\mathfrak{S}_{(4,5,4,1,0,2,0)}+\mathfrak{S}_{(2,6,4,1,0,2,1)}\\
&+\mathfrak{S}_{(2,5,5,1,0,2,1)}+\mathfrak{S}_{(2,5,4,2,0,2,1)}+\mathfrak{S}_{(2,5,4,3,0,2,0)}+\mathfrak{S}_{(2,5,4,4,0,0,1)}.
\end{align*}

\bibliographystyle{amsplain} 
\bibliography{biblio}

\end{document}